\documentclass[12pt,a4paper]{article}

\usepackage{amssymb, latexsym, amsthm}
\usepackage{graphicx} 
\usepackage{url}      
\newcommand{\doi}[1]{\url{http://dx.doi.org/#1}}
\usepackage{amsmath}  


\newcommand{\p}{\partial}
\newcommand{\D}{\Delta}

\renewcommand{\phi}{\varphi}
\newcommand{\e}{\epsilon}

\newcommand{\R}{{\mathbb R}}

\newcommand{\PX}{{\Bbb{P}}}

\newcommand{\spde}{\textsc{spde}}
\newcommand{\pde}{\textsc{pde}}

\newtheorem{theorem}{Theorem}
\newtheorem{lemma}[theorem]{Lemma}

\title{Average and deviation for slow-fast stochastic partial differential equations}

\author{
W. Wang\thanks{School of Mathematical Sciences, University of
Adelaide, Adelaide, \textsc{Australia}. \protect\url{mailto:
w.wang@adelaide.edu.au}; and Department of Mathematics, Nanjing
University, Nanjing, \textsc{China}.
\protect\url{mailto:wangweinju@yahoo.com.cn} }
 \and A.~J. Roberts\thanks{School of Mathematical Sciences, University of Adelaide, Adelaide, \textsc{Australia}.
\protect\url{mailto:anthony.roberts@adelaide.edu.au}}}

\date{\today}

\begin{document}

\maketitle

\begin{abstract}
Averaging is an important method to extract effective macroscopic
dynamics from complex systems with slow modes and fast modes. This
article derives an averaged equation for a class of stochastic
partial differential equations  without any Lipschitz assumption on
the slow modes.  The rate of convergence in probability is obtained
as a byproduct. Importantly, the deviation between the original
equation and the averaged equation is also studied. A martingale
approach proves that the deviation is described by a Gaussian
process. This gives an approximation to errors of~$\mathcal{O}(\e)$
instead of~$\mathcal{O}(\sqrt{\e})$ attained in previous averaging.
\end{abstract}

{\textit {Keywords:}}\hspace{0.2cm} Slow-fast stochastic partial
differential equations, averaging, martingale\vskip1cm

\section{Introduction}\label{sec:intro}
 The need to quantify uncertainties is widely recognized in modeling, analyzing, simulating and predicting complex phenomena~\cite[e.g.]{E00, Imkeller, TM05}. Stochastic partial differential equations (\textsc{spde}s) are appropriate mathematical models for many multiscale systems with uncertain influences~\cite{WaymireDuan}.

Very often a complex system has two widely separated timescales. Then a simplified equation which governs the evolution of the system over the long time scale is highly desirable. Such a simplified equation, capturing the dynamics of the system at the slow time scale, is often called an averaged equation. There is a great deal work on averaging principles for deterministic ordinary differential equations~\cite[e.g.]{Arn, Bog61, Vol62} and for stochastic ordinary differential equations~\cite[e.g.]{Frei98, Kha68, Kifer}. But there are few results on the averaging principle for \spde{}s. Recently, an averaged equation for a system of reaction-diffusion equations with stochastic fast component was obtained by a Lipschitz assumption on all nonlinear terms~\cite{CerFre08}. The resultant averaged equation is deterministic.

This article derives an averaged equation for a class of \spde{}s with stochastic fast component and proves a square-root rate of convergence in probability. Furthermore, the deviation between the original system and the averaged system is determined.

Let $D$ be an open bounded  interval and $L^2(D)$ be the Lebesgue space of square integrable real valued functions on~$D$. Consider the following slow-fast system
 \begin{eqnarray}
 du^\e&=&\big[\D u^\e+f(u^\e, v^\e)\big]\,dt+\sigma_1\,dW_1(t)\,,\quad
   u^\e(0)=u_0\in L^2(D) \,, \label{e:ue}\\
 dv^\e&=&\frac{1}{\e}\big[\D v^\e+g(u^\e, v^\e)\big]\,dt+
   \frac{\sigma_2}{\sqrt{\epsilon}}\,dW_2(t)\,,
    \quad  v^\e(0)=v_0\in L^2(D) \,, \label{e:ve}
 \end{eqnarray}
 with Dirichlet boundary condition. Here $W_1$~and~$W_2$ are mutually independent $L^2(D)$~valued Wiener processes defined on a complete probability space~$(\Omega, \mathcal{F}, \PX)$ detailed in the following section. If for any fixed~$u$, the fast system~(\ref{e:ve}) has unique invariant measure~$\mu_u$, then as $\e\rightarrow 0$\,, under some conditions, the solution~$u^\e$ of~(\ref{e:ue}), converges in probability to the solution of
 \begin{eqnarray}
 du&=&\big[\D u+\bar{f}(u)\big]\,dt+\sigma_1\,dW_1(t) \,, \label{e:au1} \\
  u(0)&=&u_0\quad \text{and} \quad u|_{\p D}=0 \,. \label{e:au2}
 \end{eqnarray}
Here, the average
\begin{equation}\label{e: fbar}
 \bar{f}(u)=\int_Hf(u, v)\mu_u(dv) \,.
 \end{equation}
And the convergence rate is proved to be~$1/2$ in the following sense for any $\kappa>0$
\begin{equation}\label{e:rate}
\mathbb{P}\Big\{\sup_{0\leq t\leq T}|u^\e(t)-u(t)|\leq
C^\kappa_T\e^{1/2}\Big\}>1-\kappa
\end{equation}
for some positive constant~$C^\kappa_T$; see Section~\ref{sec:average}.

 Furthermore by estimate~(\ref{e:rate}), as $\e\rightarrow 0$ the limit of $ (u^\e(t)-u(t))/\sqrt{\e}$ is proved to be a Gaussian process; see Section~\ref{sec:deviation}.

We stress that Theorem~\ref{thm:deviation} gives a much better approximation than the averaged equation.

\section{Preliminaries and main results}\label{sec:Pre}
Let $H=L^2(D)$ with $L^2$-norm denoted by~$|\cdot|$ and inner product by $\langle \cdot, \cdot\rangle$. Define the abstract operator $A=\D$ with zero Dirichlet boundary condition which defines a compact analytic semigroup~$e^{At}$, $t\geq 0$ on~$H$. For any $\alpha>0$\,, $u\in H$~define~$|u|_\alpha=|A^{\alpha/2}u|$ and for $\alpha=1$\,, the norm is denoted as~$\|\cdot\|$. Then let $H_0^\alpha$ be the space the closure of $C_0^\infty(D)$, the space of smooth functions with compact support on~$D$, under the norm $|\cdot|_\alpha$\,. Furthermore, let $H^{-\alpha}$~denote the dual space of~$H_0^\alpha$ and denote by~$\lambda_1$ the first eigenvalue of~$A$\,.  Also we are given~$H$ valued Wiener processes $W_1(t)$~and~$W_2(t)$, $t\geq 0$\,, which are mutually independent on the complete probability space~$(\Omega, \mathcal{F}, \mathcal{F}_t, \PX)$~\cite{PZ92}. Denote by~$\mathbb{E}$ the expectation operator with respect to~$\PX$. Then consider the following \spde{}s with separated time scale
 \begin{eqnarray}
 du^\e&=&\big[Au^\e+f(u^\e, v^\e)\big]\,dt+\sigma_1\,dW_1 \,, \quad u^\e(0)=u_0\in H \,, \label{e:aue}\\
 dv^\e&=&\frac{1}{\e}\big[Av^\e+g(u^\e, v^\e)\big]\,dt+
    \frac{\sigma_2}{\sqrt{\e}}\,dW_2 \,, \quad v^\e(0)=v_0\in H \,. \label{e:ave}
 \end{eqnarray}
 Here $\sigma_1\in\R\,,\sigma_2\neq 0$ are arbitrary real numbers. For our purpose we adopt the following four hypotheses.
 \begin{description}
  \item[H$_1$] $f(x, y):\R\times \R\rightarrow\R$ is continuous, there is a positive constant~$C_f$, such that $f'_x(x, y)\leq C_f$\,, $|f'_y(x,y)|\leq C_f$ and that for any~$x,y$
 \begin{eqnarray*}
 &&|f(x, y)|^2\leq ax^6+by^2+c \,,
 \\&&  f(x, y)x\leq -a x^2-bxy+c \,,
\\&& (f(x_1,y)-f(x_2,y))(x_1-x_2)\leq a(x_1-x_2)^2+c \,,
 \end{eqnarray*}
  for some positive constants~$a$, $b$ and~$c$.
 \item[H$_2$] $g(x, y):\R\times\R\rightarrow\R$ is continuous and is Lipschitz with respect to the both variables with Lipschitz constant~$C_g$. For any~$x,y$
 \begin{eqnarray*}
 g(x, y)y\leq -d y^2+exy
 \end{eqnarray*}
 for some positive constants~$d$ and~$e$.
 \item[H$_3$] $b\geq e$ and $C_g<\lambda_1$\,.
\item[H$_4$] $W_1$ and~$W_2$ are Q-Wiener processes with covariance operator~$Q_1$ and~$Q_2$ respectively. Moreover, $ \operatorname{tr}[A^{1/2}Q_i]<\infty$\,, $i=1,2$\,.
 \end{description}

With the above assumptions we have the first result on the fast component that is proved at the end of Section~\ref{sec:estimate}.
 \begin{theorem} \label{thm:mixing}
 Assume \textbf{H$_1$}--\textbf{H$_4$}. For any fixed $u\in H$\,, system~(\ref{e:ve}) has a unique stationary solution,~$\eta_u^\e(t)$, with distribution~$\mu_u$ independent of~$\e$. Moreover, the stationary measure~$\mu_u$ is exponentially mixing.
 \end{theorem}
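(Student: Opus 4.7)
The plan is to reduce the problem to the $\epsilon=1$ case by rescaling time, establish existence of an invariant measure by Krylov--Bogoliubov, and obtain uniqueness together with exponential mixing by a pathwise dissipativity/synchronous-coupling argument that exploits~\textbf{H$_3$}.

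First I would observe that the family $\{v^\e\}$ is just a time rescaling: setting $\tau=t/\epsilon$ and $\widetilde W_2(\tau)=\epsilon^{-1/2}W_2(\epsilon\tau)$, the process $\widetilde v(\tau):=v^\e(\epsilon\tau)$ solves
\begin{equation*}
 d\widetilde v = [A\widetilde v+g(u,\widetilde v)]\,d\tau+\sigma_2\,d\widetilde W_2 \,,
\end{equation*}
an $\epsilon$-free equation. Hence any invariant measure for the fast dynamics agrees with the invariant measure $\mu_u$ of this reduced equation, which automatically makes $\mu_u$ independent of~$\epsilon$. From here I work with the $\epsilon=1$ equation for fixed $u\in H$.

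Next, for existence I would apply Krylov--Bogoliubov. The key a priori estimate is obtained by It\^o's formula applied to $|\widetilde v|^2$: using $\langle A\widetilde v,\widetilde v\rangle\leq-\lambda_1|\widetilde v|^2$, the dissipative bound $g(x,y)y\leq-dy^2+exy$ from \textbf{H$_2$} (applied pointwise and integrated), and $\operatorname{tr}(\sigma_2^2 Q_2)<\infty$ from \textbf{H$_4$}, one gets
\begin{equation*}
 \frac{d}{d\tau}\EX|\widetilde v|^2 \leq -2(\lambda_1+d)\EX|\widetilde v|^2+C(|u|^2+1) \,,
\end{equation*}
hence $\sup_{\tau\geq0}\EX|\widetilde v(\tau)|^2<\infty$. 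An analogous estimate in $H_0^1$, using $\operatorname{tr}(A^{1/2}Q_2)<\infty$, bounds $\sup_{\tau\geq 0}\EX\|\widetilde v(\tau)\|^2$, which yields tightness of the laws in $H$ by the compact embedding $H_0^1\hookrightarrow H$. Any weak limit of the Ces\`aro averages of these laws is an invariant measure.

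For uniqueness and exponential mixing I would use a synchronous coupling: let $\widetilde v_1,\widetilde v_2$ be two solutions driven by the same $\widetilde W_2$ from different initial data. The difference $\phi=\widetilde v_1-\widetilde v_2$ is noise-free and satisfies
\begin{equation*}
 \tfrac{1}{2}\tfrac{d}{d\tau}|\phi|^2 = \langle A\phi,\phi\rangle+\langle g(u,\widetilde v_1)-g(u,\widetilde v_2),\phi\rangle \leq -(\lambda_1-C_g)|\phi|^2 \,,
\end{equation*}
by \textbf{H$_2$} (Lipschitz in $y$) and \textbf{H$_3$} ($C_g<\lambda_1$). Thus $|\phi(\tau)|\leq e^{-(\lambda_1-C_g)\tau}|\phi(0)|$ pathwise. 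This contraction immediately rules out two distinct invariant measures and, via a standard coupling estimate, gives exponential convergence in the Wasserstein-1 distance of the law of $\widetilde v(\tau)$ to $\mu_u$ at rate $\lambda_1-C_g$.

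The main obstacle I anticipate is checking that the a priori estimates and the dissipativity inequality transfer cleanly from the pointwise hypotheses on $g$ (stated for real variables) to the $H$-valued setting under the stated trace condition on~$Q_2$; once that translation is done the Krylov--Bogoliubov/coupling machinery runs routinely and delivers all three conclusions.
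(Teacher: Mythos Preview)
Your proposal is correct and follows essentially the same route as the paper: the time-rescaling $\tau=t/\epsilon$ to make the invariant law $\epsilon$-free, and the synchronous-coupling/dissipativity estimate $|v_1-v_2|^2\leq e^{-2(\lambda_1-C_g)\tau}|v_1(0)-v_2(0)|^2$ (from \textbf{H$_2$}--\textbf{H$_3$}) to get uniqueness and exponential mixing. The only cosmetic difference is that you invoke Krylov--Bogoliubov explicitly for existence, whereas the paper reads off existence of the stationary solution directly from the same contraction together with the moment bound~(\ref{e:6}).
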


Then we prove the following averaging result. \begin{theorem} \label{thm:averaging}
 Assume \textbf{H$_1$}--\textbf{H$_4$}. Given $T>0$\,, for any $u_0\in H$\,, solution~$u^\e(t, u_0)$ of~(\ref{e:ue}) converges in probability to~$u$ in~$C(0, T; H)$ which solves~(\ref{e:au1})--(\ref{e:au2}). Moreover the convergence rate is~$1/2$ that is for any $\kappa>0$
\begin{equation*}
 \mathbb{P}\Big\{\sup_{0\leq t\leq T} |u^\e(t)-u(t)|\leq
 C^\kappa_T\sqrt{\e}\Big\}>1-\kappa
\end{equation*}
for some positive constant $C^\kappa_T>0$\,.
 \end{theorem}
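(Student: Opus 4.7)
The plan is to adapt the classical Khasminskii time-discretization scheme to this \spde{} setting, combined with a stopping-time argument that accommodates the one-sided, polynomially growing nonlinearity~$f$. A key structural simplification is that $u^\e$ and~$u$ are driven by the \emph{same} noise $\sigma_1\,dW_1$, so the stochastic integrals cancel when forming $u^\e-u$; only a pathwise integral involving $f(u^\e,v^\e)-\bar f(u)$ has to be controlled.

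\textbf{Step 1 ($\e$-uniform a priori estimates).} Applying It\^o's formula to $|u^\e|^2$ and~$|v^\e|^2$ and invoking the dissipativity estimates $f(x,y)x\le -ax^2-bxy+c$ of~\textbf{H$_1$} together with $g(x,y)y\le -dy^2+exy$ of~\textbf{H$_2$}, one sees that the cross terms $-bxy$ and $+exy$ cancel because $b\ge e$ in~\textbf{H$_3$}; the trace condition~\textbf{H$_4$} controls the noise, yielding $\EX\sup_{t\le T}|u^\e(t)|^2\le C$ and $\sup_{t\le T}\EX|v^\e(t)|^2\le C$ independent of~$\e$. Since $|f(x,y)|^2\le ax^6+by^2+c$ is only polynomially bounded in~$x$, I introduce a stopping time $\tau_R:=\inf\{t\ge 0:|u^\e(t)|+|u(t)|\ge R\}$ and carry out the analysis on the event $\{t<\tau_R\}$; Chebyshev and the a priori bound give $\PX(\tau_R<T)\le C/R^2$ uniformly in~$\e$, which converts an $L^2$ bound up to $\tau_R$ into the claimed convergence in probability.

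\textbf{Step 2 (Khasminskii frozen-slow process).} Partition $[0,T]$ into subintervals of length $\delta=\delta(\e)$ chosen so that $\delta\to 0$ and $\delta/\e\to\infty$. On $[k\delta,(k+1)\delta)$ let $\hat v^\e$ solve the fast equation with slow variable frozen at~$u^\e(k\delta)$. The Lipschitz bound on~$g$ from~\textbf{H$_2$} and the spectral gap $C_g<\lambda_1$ from~\textbf{H$_3$} give, via Gronwall, an estimate $\EX\int_0^T|v^\e(s)-\hat v^\e(s)|^2\,ds\le C\delta$. I then decompose
\[
 \int_0^t\bigl[f(u^\e,v^\e)-\bar f(u)\bigr]\,ds = I_1+I_2+I_3+I_4,
\]
replacing in turn $v^\e\to\hat v^\e$ (using $|f_y'|\le C_f$ and the above), $u^\e(s)\to u^\e(k\delta)$ (using the one-sided Lipschitz estimate of~\textbf{H$_1$} and H\"older continuity of $u^\e$ in time), the averaging $f(u^\e(k\delta),\hat v^\e(s))\to\bar f(u^\e(k\delta))$, and finally $u^\e\to u$. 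Terms $I_1$ and~$I_2$ are of order~$\delta^{1/2}$; $I_3$ is the genuine averaging contribution, and by the exponential mixing of~$\mu_{u^\e(k\delta)}$ supplied by Theorem~\ref{thm:mixing} a direct computation gives $\EX|I_3|^2\le C_R\,\e/\delta$ on $\{t<\tau_R\}$. Balancing these rates by choosing~$\delta$ of order a suitable power of~$\e$ makes $|I_1|+|I_2|+|I_3|=O(\sqrt\e)$.

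\textbf{Step 3 (Gronwall closure).} It\^o's formula on $|u^\e-u|^2$, with the noise cancellation noted above and the one-sided bound $(f(x_1,y)-f(x_2,y))(x_1-x_2)\le a(x_1-x_2)^2+c$ of~\textbf{H$_1$}, yields an inequality of the form
\[
 \tfrac{d}{dt}|u^\e-u|^2 + 2\|u^\e-u\|^2 \le 2a\,|u^\e-u|^2 + 2\,\langle u^\e-u,\,f(u^\e,v^\e)-\bar f(u)\rangle + C.
\]
Inserting Step~2 and applying Gronwall delivers $\EX\sup_{t\le T\wedge\tau_R}|u^\e(t)-u(t)|^2\le C_R\,\e$, and Chebyshev combined with $\PX(\tau_R<T)\le C/R^2$ gives the advertised convergence in probability with rate~$1/2$ by choosing $R=R(\kappa)$ large enough.

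I expect the main obstacle to be the quantitative averaging estimate for~$I_3$: one must upgrade the qualitative exponential mixing of Theorem~\ref{thm:mixing} to an explicit rate in~$\delta/\e$ that is uniform over frozen values $u^\e(k\delta)$ lying in a ball of radius~$R$, and reconcile this uniformity with the polynomial growth $|f|^2\le ax^6+by^2+c$. The stopping-time truncation of Step~1 is introduced precisely to make that compatibility possible, and the Hölder continuity of $u^\e$ in time needed for~$I_2$ has to be proved in the negative-order space $H^{-\alpha}$ and then interpolated, which is the most technical piece of the argument.
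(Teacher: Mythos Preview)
Your strategy—Khasminskii time-discretization plus a localization to tame the cubic growth of~$f$—is exactly the skeleton of the paper's proof, but the localization device you chose is not strong enough, and this creates a genuine gap.

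The paper does not use a stopping time. Instead it invokes the tightness of $\{u^\e\}$ in $C(0,T;H)$ (Theorem~\ref{thm:tight}) together with the bound $\EX\sup_{t\le T}\|u^\e(t)\|^2\le C_T$ to produce, for each $\kappa>0$, a compact set $K_\kappa\subset C(0,T;H)$ with $\PX(u^\e\in K_\kappa)>1-\kappa$ on which additionally $\|u^\e(t)\|\le C_T^\kappa$ for all $t$. It then argues \emph{pathwise} on $\Omega_\kappa=\{u^\e\in K_\kappa\}$, using the mild formulation and a Gronwall inequality on $|u^\e-u|$ (not its square). This buys two things simultaneously: (i) a uniform $H^1_0$ bound on $u^\e$, and (ii) equicontinuity in $C(0,T;H)$, giving $|u^\e(t)-u^\e(k\delta)|\le C_T^\kappa\delta$ directly without any H\"older-in-time estimate in negative spaces. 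With $\delta=\sqrt{\e}$ the whole argument closes.

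Your stopping time $\tau_R=\inf\{t:|u^\e(t)|+|u(t)|\ge R\}$ controls only the $H$-norm. But the local Lipschitz constant of $\bar f$, and of $f(\cdot,v)$, depends on the $H^1_0$ norms: see~(\ref{e:barf-Lip}), where the bound is $(\|u_1\|^2+\|u_2\|^2+\cdots)\,|u_1-u_2|$, because the cubic growth $|f(x,y)|^2\le ax^6+\cdots$ forces you through the Sobolev embedding $H^1_0\hookrightarrow L^6$. Stopping on $|u^\e|_H$ therefore does not make the constants in $I_2$, $I_4$ or in the final Gronwall step bounded by~$C_R$. The fix is either to stop on $\|u^\e\|+\|u\|$ (which requires the bound $\EX\sup_t\|u^\e(t)\|^2<\infty$ that the paper proves but you did not claim in Step~1), or to adopt the paper's compactness route, which also removes the need for your projected interpolation argument for the time regularity of~$u^\e$.

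A second, smaller issue: the displayed inequality in Step~3 is inconsistent. If you have already extracted $2a\,|u^\e-u|^2+C$ via the one-sided bound of \textbf{H$_1$}, you cannot also keep the full term $2\langle u^\e-u,\,f(u^\e,v^\e)-\bar f(u)\rangle$; and the additive constant~$+c$ in that hypothesis, taken literally, would block $|u^\e-u|^2\to 0$ altogether. The paper avoids this by working in the mild form and using the genuine local Lipschitz estimate~(\ref{e:barf-Lip}) on $K_\kappa$, never invoking the one-sided condition for the difference $u^\e-u$.
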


Having the above averaging result we consider the deviation between~$u^\e$ and~$u$. For this we introduce
\begin{equation}\label{e:ze}
 z^\e(t)=\frac{1}{\sqrt{\e}}(u^\e-u) \,.
\end{equation}
Then we  have
 \begin{theorem}\label{thm:deviation}
 $z^\e$ converges in distribution to~$z$ in space~$C(0, T; H)$ which solves
 \begin{equation}\label{e:z}
  \dot{z}=Az+\overline{f'_u}(u)z+\sqrt{B(u)}\dot{\overline{W}}
 \end{equation}
 where $B(u): H\rightarrow H$ is Hilbert--Schmidt with
 \begin{align*}&
 B(u)=2\int_0^\infty\mathbb{E}\Big[(f(u, \eta_u(t))-\bar{f}(u))\otimes(f(u,
 \eta_u(0))-\bar{f}(u))\Big]dt
 \\&
\overline{f'_u}(u)=\int_Hf'_u(u, v)\mu^u(dv)
\end{align*}
and $\overline{W}(t)$ is an $H$-valued cylindrical Wiener process with covariance operator~$\text{Id}_H$.
 \end{theorem}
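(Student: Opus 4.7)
The plan is to derive a stochastic evolution equation for~$z^\e$, establish tightness of its laws in $C(0,T;H)$, and then identify every subsequential limit as a solution of the linear SPDE~\eqref{e:z} by a martingale characterization. Subtracting~\eqref{e:au1} from~\eqref{e:aue} and dividing by~$\sqrt{\e}$ (the common noise $\sigma_1\,dW_1$ cancels) gives
\begin{equation*}
 dz^\e = A z^\e\,dt + J^\e(t)\,z^\e\,dt + \tfrac{1}{\sqrt{\e}}\bigl(f(u,v^\e)-\bar f(u)\bigr)\,dt\,,\quad z^\e(0)=0\,,
\end{equation*}
where $J^\e(t):=\int_0^1 f'_u\bigl(u+\theta\sqrt{\e}\,z^\e,v^\e\bigr)\,d\theta$. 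I would first prove $\{z^\e\}$ is tight in $C(0,T;H)$, combining Theorem~\ref{thm:averaging} (sup-norm boundedness in probability) with uniform in~$\e$ time-H\"older estimates obtained from the mild formulation and the smoothing of~$e^{At}$; the delicate piece is H\"older control of the third term, which I would handle by a Khasminskii time-slab decomposition and the exponential mixing of~$\mu_u$ from Theorem~\ref{thm:mixing}.

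To identify the limit, along a subsequence $z^{\e'}$ converges in law (hence, by Skorokhod, almost surely) to some $\tilde z\in C(0,T;H)$. Set
\begin{equation*}
 M^\e(t):=z^\e(t)-\int_0^t\bigl[A z^\e(s)+\overline{f'_u}(u(s))\,z^\e(s)\bigr]\,ds\,.
\end{equation*}
The difference between $dM^\e$ and the actual middle drift $J^\e z^\e\,dt$ splits into two vanishing errors: the defect $[J^\e-f'_u(u,v^\e)]\,z^\e$, which tends to zero by continuity of $f'_u$ together with the bound $\sqrt{\e}\,z^\e\to 0$ supplied by Theorem~\ref{thm:averaging}; and the averaging defect $[f'_u(u,v^\e)-\overline{f'_u}(u)]\,z^\e$, which vanishes by reapplying the averaging argument of Theorem~\ref{thm:averaging} to~$f'_u$. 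It then suffices to show $M^\e$ converges to a continuous centred Gaussian martingale $M$ with $\langle M\rangle_t=\int_0^t B(u(s))\,ds$. Partition $[0,T]$ into slabs of length $\Delta$ with $\e\ll\Delta\ll 1$, freeze the slow motion at each $u(t_k)$, couple $v^\e$ on $[t_k,t_{k+1}]$ with the stationary process $\eta^\e_{u(t_k)}$, and rescale $s=\e\tau$; the classical CLT for additive functionals of exponentially mixing Markov processes yields approximate Gaussian slab increments whose covariances converge to $\Delta\cdot B(u(t_k))$ after using stationarity of $\eta_u$ and an elementary change of variable. Cross-slab decorrelation from the Markov property and mixing produces both the martingale property and the Gaussianity of the limit, while sending $\Delta\to 0$ converts the Riemann-sum covariance into~$\int_0^t B(u(s))\,ds$.

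Finally, equation~\eqref{e:z} is a linear SPDE with bounded coefficient $\overline{f'_u}(u(\cdot))$ and Hilbert--Schmidt diffusion $\sqrt{B(u(\cdot))}$, hence admits a pathwise unique mild solution~$z$; all subsequential limits must coincide, proving $z^\e\Rightarrow z$ in $C(0,T;H)$. The main obstacle is the functional central limit theorem for the fluctuation $\e^{-1/2}\int_0^\cdot(f(u(s),v^\e(s))-\bar f(u(s)))\,ds$ with a \emph{varying} slow variable~$u(s)$. In finite dimensions this is usually handled via a Poisson equation for the generator of the frozen fast process, but in the present infinite-dimensional setting the solvability and $u$-regularity of that equation are delicate; the chosen route is to bypass it by the Khasminskii slab coupling above, with uniformity of the mixing rate in~$u$ ensured by hypotheses~H$_3$ and~H$_4$.
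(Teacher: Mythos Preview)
Your plan is sound and matches the paper's at the coarse level---tightness plus martingale identification---but the execution differs in a meaningful way. The paper \emph{splits} $z^\e=z_1^\e+z_2^\e$, where $z_1^\e$ is driven purely by the \emph{stationary} fluctuation $\e^{-1/2}\bigl(f(u,\eta_u^\e)-\bar f(u)\bigr)$ and $z_2^\e$ absorbs the remainder $\e^{-1/2}\bigl(f(u^\e,v^\e)-f(u,\eta_u^\e)\bigr)$. The noise then comes entirely from $z_1^\e$, whose limit is identified by the Kesten--Papanicolaou/Watanabe martingale-problem calculation: one expands $h(z_1^\e(t))-h(z_1^\e(s))$ for test functions $h$, integrates by parts in time to generate a cross term involving $h''$, and shows directly from exponential decay of correlations that its expectation converges to $\tfrac12\int_s^t\operatorname{tr}[h''(z_1)B(u)]\,d\tau$; the drift $\overline{f'_u}(u)z$ is produced separately as the (deterministic) limit of $z_2^\e$. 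You instead keep $z^\e$ whole, subtract the limiting drift to form~$M^\e$, and obtain the bracket by Khasminskii slabs, slab-wise coupling of $v^\e$ to the frozen stationary process, and an appeal to a CLT for additive functionals of exponentially mixing processes. Both routes work; the paper's splitting buys cleaner stationary-process computations (no need to carry the $v^\e\!-\!\eta_u^\e$ discrepancy into the CLT), while your route is more modular but leans on an infinite-dimensional functional CLT as a black box and on uniform-in-$u$ coupling bounds. One further difference: for tightness the paper does not use slabs at all---it gets $C^\delta(0,T;H)\cap C(0,T;H^\alpha)$ bounds directly from an energy estimate for $z^\e$ and the smoothing of $e^{At}$ applied to the mild formulation.
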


\section{Some  a priori estimates}\label{sec:estimate}

This section gives some a priori estimates for the solution
of~(\ref{e:aue})--(\ref{e:ave}) which yields the tightness of~$u^\e$
in space~$C(0, T; H)$. First we give a wellposedness result.
\begin{theorem}\label{thm:existence}
 Assume \textbf{H$_1$}--\textbf{H$_4$}. For any $u_0\in H$\,, $v_0\in H$ and any $T>0$\,, there is unique
solution~$(u^\e(t), v^\e(t))$ in $L^2(\Omega, C(0, T; H)\cap L^2(0,
T; H^1_0))$ for~(\ref{e:aue})--(\ref{e:ave})\,.
\end{theorem}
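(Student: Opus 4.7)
The plan is the classical monotone-SPDE template: Galerkin approximation, uniform energy estimates that exploit the slow--fast dissipation, and identification of the nonlinear limit via monotonicity. First I would project onto the span of the first $n$ eigenfunctions of~$A$. The coefficients of the resulting finite-dimensional SDE system are locally Lipschitz (via the pointwise bounds $f'_x\le C_f$, $|f'_y|\le C_f$ in~\textbf{H$_1$} and the global Lipschitzness of~$g$ in~\textbf{H$_2$}), so a unique local strong solution exists. To make it global and uniform in~$n$, I would apply It\^{o}'s formula to the weighted energy $|u^{\e,n}|^2+\alpha|v^{\e,n}|^2$ with $\alpha=\e b/e$, chosen precisely so that the indefinite cross terms produced by the dissipation inequalities $f(x,y)x\le -ax^2-bxy+c$ and $g(x,y)y\le -dy^2+exy$ cancel. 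Combined with $\langle Au,u\rangle=-\|u\|^2$, the trace condition~\textbf{H$_4$}, a Burkholder--Davis--Gundy estimate, and Gronwall, this yields
\[
 \EX\sup_{t\le T}\bigl(|u^{\e,n}(t)|^2+|v^{\e,n}(t)|^2\bigr)
 + \EX\int_0^T\bigl(\|u^{\e,n}\|^2+\tfrac{1}{\e}\|v^{\e,n}\|^2\bigr)\,dt\le K_T,
\]
uniformly in~$n$. Because $D$ is a one-dimensional interval, $H_0^1(D)\hookrightarrow L^\infty(D)$, so the growth $|f|^2\le ax^6+by^2+c$ in~\textbf{H$_1$} upgrades the energy bound to $\EX\int_0^T|f(u^{\e,n},v^{\e,n})|^2\,dt\le K_T$, together with higher moments by iterating the BDG step.

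Next I would establish tightness of the laws of $(u^{\e,n},v^{\e,n})$ in $C(0,T;H^{-\delta})\cap L^2(0,T;H)$ via an Aubin--Lions-type fractional-time argument and pass to a limit $(\tilde u^\e,\tilde v^\e)$ on a Skorokhod extension, obtaining strong $L^2(0,T;H)$ convergence and weak $L^2(0,T;H_0^1)$ convergence. The hard part will be identifying the weak limit of $P_n f(u^{\e,n},v^{\e,n})$: the sextic growth of~$f$ gives only weak-$L^2$ convergence of the nonlinear term, which is not enough to identify it pointwise. My plan is the Minty--Browder monotonicity trick, powered by the one-sided Lipschitz inequality
\[
 \bigl\langle f(u_1,v)-f(u_2,v),\,u_1-u_2\bigr\rangle\le C_f\,|u_1-u_2|^2
\]
inherent in $f'_x\le C_f$, together with the Lipschitz-in-$v$ control $|f(u,v_1)-f(u,v_2)|\le C_f|v_1-v_2|$. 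The stochastic integrals pass to the limit by the standard Da~Prato--Zabczyk identification of their quadratic variation.

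Finally, for uniqueness I would apply It\^{o} to $|u^{\e,1}-u^{\e,2}|^2+\gamma|v^{\e,1}-v^{\e,2}|^2$ for a small weight~$\gamma$: the one-sided Lipschitz bound on~$f$ in its first argument, the Lipschitz bound in the second, and the fast-equation coercivity $-\|V\|^2/\e + C_g|V|^2/\e \le -(1-C_g/\lambda_1)\|V\|^2/\e$ furnished by Poincar\'{e} together with $C_g<\lambda_1$ from~\textbf{H$_3$}, close a Gronwall estimate. Pathwise uniqueness then promotes the Skorokhod-space weak solution to a strong solution on the original probability space via a Gy\"{o}ngy--Krylov argument. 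I expect the monotonicity identification in the previous paragraph to be by far the most delicate step; the hypotheses~\textbf{H$_1$}--\textbf{H$_3$} appear engineered precisely to make exactly that argument (and the matching uniqueness estimate) work.
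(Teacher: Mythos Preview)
Your proposal is correct and considerably more detailed than what the paper provides: the paper does not prove this theorem at all, remarking only that ``the above result is derived by a standard method~[PZ92] and so is here omitted.'' Your Galerkin--monotonicity outline is exactly one of the standard templates that citation points to, and your use of the weighted energy $|u|^2+\alpha|v|^2$ with $\alpha=\e b/e$ to kill the cross terms, the one-sided Lipschitz bound on~$f$ for the Minty step, and the $C_g<\lambda_1$ Poincar\'e gap for uniqueness are precisely the roles hypotheses~\textbf{H$_1$}--\textbf{H$_3$} are designed to play.
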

The above result is derived by a standard method~\cite{PZ92} and so is here omitted. Then we have the following estimates for~$(u^\e,
v^\e)$.
\begin{theorem} \label{thm:estimate}
Assume \textbf{H$_1$}--\textbf{H$_4$}. For $u_0\in H^1_0$ and $v_0\in
H_0^1$\,, for any $T>0$\,, there is a positive constant~$C_T$ which is
independent of~$\e$, such that
 \begin{equation}\label{e:Euv}
 \mathbb{E}\sup_{0\leq t\leq T}\|u^\e(t)\|^2+\sup_{0\leq t\leq T}
 \mathbb{E}|v^\e(t)|^2\leq C_T(\|u_0\|^2+|v_0|^2)
 \end{equation}
and for any positive integer~$m$,
\begin{equation}\label{e:Eintuv}
 \mathbb{E}\int_0^T\|u^\e(s)\|^{2m}ds+\mathbb{E}\int_0^T\|v^\e(t)\|^2ds\leq
 C_T(|u_0|^2+|v_0|^2) \,.
 \end{equation}
Moreover
\begin{equation}\label{e:ut}
\mathbb{E}|\dot{u}^\e|_{L^2(0, T; H^{-1})}\leq C_T(|u_0|^2+|v_0|^2) \,.
\end{equation}
\end{theorem}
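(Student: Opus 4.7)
The plan is to derive all three estimates by applying It\^o's formula to suitable energy functionals of $u^\e$ and $v^\e$, producing a coupled pair of differential inequalities whose resolvability is secured by the spectral compatibility $b\geq e$ from~\textbf{H$_3$}. The driving structural features are the strong $1/\e$ dissipation in the fast equation combined with $g(x,y)y\leq -dy^2+exy$ from~\textbf{H$_2$}, which forces $v^\e$ to stay uniformly bounded in~$\e$, and the dissipativity $f(x,y)x\leq -ax^2-bxy+c$ from~\textbf{H$_1$}, which closes a Gronwall-type estimate on~$u^\e$.

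First I would establish the $L^2$-in-space bounds. Applying It\^o to $|v^\e|^2$ with $\langle Av,v\rangle\leq -\lambda_1|v|^2$, the bound from~\textbf{H$_2$}, Young's inequality, and $\operatorname{tr}(Q_2)<\infty$ from~\textbf{H$_4$} (the latter following from $\operatorname{tr}(A^{1/2}Q_2)<\infty$ and boundedness of~$A^{-1/2}$) yields an inequality of the form
\[
\frac{d}{dt}\mathbb{E}|v^\e|^2\leq \frac{1}{\e}\bigl[-2\alpha\,\mathbb{E}|v^\e|^2+\beta\,\mathbb{E}|u^\e|^2+C\bigr].
\]
A parallel It\^o computation for $|u^\e|^2$ using~\textbf{H$_1$} gives
\[
\frac{d}{dt}\mathbb{E}|u^\e|^2\leq -\alpha'\,\mathbb{E}|u^\e|^2+\beta'\,\mathbb{E}|v^\e|^2+C'.
\]
The condition $b\geq e$ is precisely what forces the cross-coupling in this $2\times 2$ linear system to satisfy a comparison condition under which a coupled Gronwall produces bounds uniform in~$\e$, with the $1/\e$ factor making $\mathbb{E}|v^\e|^2$ relax rapidly to a quasi-equilibrium controlled by $\mathbb{E}|u^\e|^2$. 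Iterating the same It\^o argument on higher powers $|u^\e|^{2m}$ and using the polynomial growth $|f|^2\leq ax^6+by^2+c$ together with H\"older then yields the moment estimate~(\ref{e:Eintuv}).

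To upgrade to the $H^1$ version in~(\ref{e:Euv}), I would apply It\^o's formula to $\|u^\e\|^2=|A^{1/2}u^\e|^2$, obtaining a drift $-2|Au^\e|^2+2\langle Au^\e,f(u^\e,v^\e)\rangle$ plus a stochastic correction whose trace is controlled by~\textbf{H$_4$} (or, if this trace requires strengthening, one may instead work in mild form and exploit analytic-semigroup smoothing of the stochastic convolution). The cross term is absorbed using Young's inequality and the polynomial growth of~$f$, the $x^6$ contribution being tamed by the $L^{2m}$ estimate just obtained. To move the supremum inside the expectation, I would apply Burkholder--Davis--Gundy to the martingale $\int_0^t\sigma_1\langle Au^\e,dW_1\rangle$, whose quadratic variation is controlled by the $L^2(0,T;H^1_0)$ bound already in hand.

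Finally, the $H^{-1}$ estimate~(\ref{e:ut}) follows directly from the equation itself: $Au^\e\in H^{-1}$ with norm bounded by $\|u^\e\|$, $f(u^\e,v^\e)\in L^2(0,T;H)\hookrightarrow L^2(0,T;H^{-1})$ by~\textbf{H$_1$} and~(\ref{e:Eintuv}), and the $\sigma_1 W_1$ contribution is finite by~\textbf{H$_4$}. I expect the chief obstacle to be careful bookkeeping in the coupled Gronwall of the second paragraph: making the constants $a$, $b$, $d$, $e$, $\lambda_1$, $C_g$ combine to yield an $\e$-independent bound is exactly where the hypotheses in~\textbf{H$_3$} play their pivotal role. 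A secondary subtlety is the mild circularity in passing from mean estimates to $\sup$-in-expectation estimates via BDG, which is resolved by first establishing all mean and integrated-moment estimates before upgrading to sup-estimates.
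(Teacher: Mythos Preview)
Your proposal is correct and follows essentially the same route as the paper: It\^o's formula on $|u^\e|^2$, $|v^\e|^2$, $\|u^\e\|^2$, and higher powers, a coupled Gronwall argument exploiting \textbf{H$_1$}--\textbf{H$_3$} to obtain $\e$-independent bounds, a BDG-type inequality to pass to $\mathbb{E}\sup$ (the paper invokes Da~Prato--Zabczyk's Lemma~7.2, which is the same device), and then reading off~(\ref{e:ut}) directly from the equation. The only point the paper makes explicit that you leave implicit is the Sobolev embedding $H^1_0\subset L^6(D)$ needed to place $f(u^\e,v^\e)$ in $L^2(0,T;H)$ via the $x^6$ growth bound and~(\ref{e:Eintuv}); you should state this when you write it up.
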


\begin{proof}
Applying It\^o formula to~$|u^\e(t)|^2$ and~$|v^\e(t)|^2$
respectively and by Gronwall lemma there is positive constant~$C$
which is independent of~$\e$ such that for any $t>0$
\begin{equation}\label{e:4}
 \mathbb{E}|u^\e(t)|^2+ \e\mathbb{E}|v^\e(t)|^2
 \leq C(|u_0|^2+|v_0|^2) \,.
\end{equation}
At the same time we have for any $T>0$ that there is positive
constant~$C_T$ which is independent of~$\e$ such that for $0\leq
t\leq T$
 \begin{equation}\label{e:5}
 \mathbb{E}\int_0^t\|u^\e(s)\|^2ds+\mathbb{E}\int_0^t\|v^\e(s)\|^2ds\leq
 C_T(|u_0|^2+|v_0^2|)
\end{equation}
and
 \begin{equation}\label{e:6}
 \sup_{t\geq 0}\mathbb{E}|v^\e(t)|^2\leq C(|u_0|^2+|v_0|^2) \,.
 \end{equation}

By applying  It\^o formula to~$\|u^\e(t)\|^2$ and using~(\ref{e:5}), a lemma of Da Prato \& Zabczyk's~\cite[Lemma~7.2]{PZ92} proves
\begin{equation}\label{e:supu}
\mathbb{E}\sup_{0\leq t\leq T}\|u^\e(t)\|^2\leq
C_T(\|u_0\|^2+|v_0|^2) \,.
\end{equation}

Further, applying  It\^o formula to~$\|u^\e(t)\|^{2m}$
and~$\|v^\e(t)\|^{2m}$, by the assumption on~$f$ and~(\ref{e:5}), we
then establish the inequality~(\ref{e:Eintuv}). The
bound~(\ref{e:ut}) can be proved by estimating~(\ref{e:Eintuv}) and
the embedding $H^1_0\subset L^6(D)$.
\end{proof}

 Now we show that $\mathcal{L}(u^\e)$, the distribution of~$u^\e$, is tight in~$C(0, T; H)$. For this we need the following lemma by Simon~\cite{Simon}.
\begin{lemma}\label{embedding}
Let~$E$, $E_0$ and~$E_1$ be Banach spaces such that $E_1\Subset
E_0$\,, the interpolation space $(E_0, E_1)_{\theta,1}\subset E$ with
$\theta\in (0, 1)$  and $E\subset E_0$ with $\subset$ and~$\Subset$
denoting continuous and compact embedding respectively. Suppose
$p_0,p_1\in [1,\infty]$ and $T>0$\,, such that
\begin{displaymath}
\mathcal{V} \text{ is a bounded set in } L^{p_1}(0, T; E_1)
\end{displaymath}
and
\begin{displaymath}
\p\mathcal{V}:=\{\p v: v\in \mathcal{V}\} \text{ is
a bounded set in } L^{p_0}(0, T; E_0).
\end{displaymath}
Here $\p$ denotes the distributional derivative. If
$1-\theta>1/p_\theta$ with
 \begin{displaymath}
\frac{1}{p_\theta}=\frac{1-\theta}{p_0}+\frac{\theta}{p_1}\,,
 \end{displaymath}
then $\mathcal{V}$ is relatively compact in~$C(0, T; E)$.
\end{lemma}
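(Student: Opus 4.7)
The plan is to verify the hypotheses of the Arzel\`a--Ascoli theorem in $C(0,T;E)$: pointwise relative compactness in~$E$ together with equicontinuity of~$\mathcal{V}$.

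First I would record an Ehrling-type inequality: under the hypotheses $E_1\Subset E_0$ and $(E_0,E_1)_{\theta,1}\subset E$, for every $\eta>0$ there is a constant $C_\eta$ such that
\begin{equation*}
\|u\|_E \leq \eta\|u\|_{E_1}+C_\eta\|u\|_{E_0}\quad\text{for all }u\in E_1,
\end{equation*}
and in multiplicative form $\|u\|_E\leq C\|u\|_{E_0}^{1-\theta}\|u\|_{E_1}^\theta$. Both are standard consequences of the $K$-functional definition of the real interpolation space combined with the compactness of $E_1\Subset E_0$; one sets them up once and for all.

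For equicontinuity, writing $v(t+h)-v(t)=\int_t^{t+h}\partial_s v(s)\,ds$ and applying H\"older in time immediately gives
\begin{equation*}
\|v(t+h)-v(t)\|_{E_0}\leq h^{1/p_0'}\|\partial v\|_{L^{p_0}(0,T;E_0)},
\end{equation*}
so this difference vanishes uniformly on~$\mathcal{V}$ as $h\to 0$. To bound the same difference in $E_1$ when $p_1<\infty$ one cannot evaluate pointwise; following Simon, I would replace $v(t)$ by the time average $v_\delta(t)=\delta^{-1}\int_0^\delta v(t+s)\,ds$, control $v-v_\delta$ in $E_0$ through the derivative bound, and bound $v_\delta$ pointwise in $E_1$ via the $L^{p_1}(0,T;E_1)$ norm. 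Substituting these two estimates into the interpolation inequality and optimising $\delta=\delta(h)$ produces a modulus of continuity in~$E$; the condition $1-\theta>1/p_\theta$ is precisely what forces this modulus to tend to zero as $h\to 0$.

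Finally, uniform pointwise $E$-precompactness at each $t$ follows from the averaging argument above combined with $E_1\Subset E_0$, so a diagonal extraction on a countable dense set of times plus equicontinuity delivers a subsequence converging in $C(0,T;E_0)$, and the interpolation inequality upgrades this to convergence in $C(0,T;E)$. I expect the main technical obstacle to be the treatment of pointwise traces when $p_1<\infty$: the time-averaging trick and the sharp exponent condition $1-\theta>1/p_\theta$ are the crux of the argument, and keeping every constant uniform across~$\mathcal{V}$ requires careful bookkeeping.
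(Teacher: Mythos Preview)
The paper does not prove this lemma at all: it is stated as a quotation from Simon~\cite{Simon} and immediately applied to establish tightness in Theorem~\ref{thm:tight}. Your sketch is essentially Simon's original argument (Ehrling/interpolation inequality, time-averaging to handle pointwise $E_1$-control when $p_1<\infty$, Arzel\`a--Ascoli), so it is correct in spirit, but there is nothing in the paper to compare it against beyond the citation.
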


By the above lemma we have the following result.
 \begin{theorem}\label{thm:tight}
Assume \textbf{H$_1$}--\textbf{H$_4$}. $\{\mathcal{L}(u^\e)\}_{\e>0}$ is
tight in space~$C(0, T; H)$.
 \end{theorem}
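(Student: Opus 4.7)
The plan is to apply Simon's compactness lemma (Lemma~\ref{embedding}) with $E_0 = H^{-1}$, $E_1 = H_0^1$, $E = H$, exponents $p_0 = 2$ and $p_1 = \infty$, and interpolation parameter $\theta = 1/2$, and then convert the deterministic compactness criterion into probabilistic tightness via Chebyshev's inequality applied to the bounds of Theorem~\ref{thm:estimate}.

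First I verify the hypotheses of the lemma for this choice. Since $D$ is a bounded interval, the Rellich--Kondrachov theorem gives $H_0^1 \Subset H = L^2(D) \subset H^{-1}$, so both $E_1 \Subset E_0$ and $E \subset E_0$ hold. With $\theta = 1/2$ the real interpolation space $(H^{-1}, H_0^1)_{1/2, 1}$ embeds continuously into $(H^{-1}, H_0^1)_{1/2, 2} = H$ by monotonicity of the real method in its second exponent, so $(E_0, E_1)_{\theta, 1} \subset E$. Finally $1/p_\theta = (1-\theta)/p_0 + \theta/p_1 = 1/4$ while $1 - \theta = 1/2 > 1/4$, so Simon's strict inequality is satisfied.

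For each $R > 0$ I set
$$K_R = \left\{v \in L^\infty(0,T;H_0^1) : \|v\|_{L^\infty(0,T;H_0^1)} \leq R,\ \|\dot v\|_{L^2(0,T;H^{-1})} \leq R\right\}.$$
Lemma~\ref{embedding} guarantees that $K_R$ is relatively compact in $C(0,T;H)$; write $\overline{K_R}$ for its closure. The a priori bounds~(\ref{e:Euv}) and~(\ref{e:ut}) combined with Chebyshev's inequality yield
$$\mathbb{P}\bigl\{u^\e \notin \overline{K_R}\bigr\} \leq \mathbb{P}\Bigl\{\sup_{0 \leq t \leq T}\|u^\e(t)\| > R\Bigr\} + \mathbb{P}\bigl\{\|\dot u^\e\|_{L^2(0,T;H^{-1})} > R\bigr\} \leq \frac{C_T}{R^2} + \frac{C_T}{R},$$
uniformly in $\e > 0$. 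Given $\kappa > 0$, I then pick $R = R(\kappa, T)$ so large that the right-hand side does not exceed $\kappa$, which gives $\mathbb{P}\{u^\e \in \overline{K_R}\} > 1 - \kappa$ for every $\e$; this is precisely tightness of $\{\mathcal{L}(u^\e)\}$ in $C(0,T;H)$.

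I do not expect any serious obstacle: the whole argument is essentially a routine combination of the deterministic compactness lemma with the two bounds already in hand. The only mild subtlety is the identification $(H^{-1}, H_0^1)_{1/2, 2} = H$ and the monotonicity $(E_0,E_1)_{\theta,1} \subset (E_0,E_1)_{\theta,2}$, which are classical Lions--Peetre facts. Were the pathwise $H_0^1$ bound~(\ref{e:Euv}) unavailable, one would have to take $p_1 = 2$, where Simon's strict inequality fails at $\theta = 1/2$, so the $L^\infty$-in-time control of $\|u^\e\|$ is exactly what makes the argument go through.
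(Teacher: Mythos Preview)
Your proof is correct and takes essentially the same approach as the paper: both apply Simon's lemma with $E_0=H^{-1}$, $E=H$, $E_1=H_0^1$, $p_0=2$, $\theta=1/2$, and then convert the $\e$-uniform bounds of Theorem~\ref{thm:estimate} into tightness. The only cosmetic difference is that you take $p_1=\infty$ via the sup-in-time bound~(\ref{e:Euv}), whereas the paper takes $p_1$ an arbitrarily large integer via the moment bound~(\ref{e:Eintuv}); you also spell out the interpolation identification and the Chebyshev/Markov step, which the paper leaves implicit.
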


 \begin{proof}
Taking $E_0=H^{-1}$, $E=H$ and $E_1=H_0^1$ and $p_0=2$\,,
$\theta=1/2$\,. By Theorem~\ref{thm:estimate}, $p_1$~can be taken as
arbitrary lager positive integer, then by Lemma~\ref{embedding},
$\{\mathcal{L}(u^\e)\}_{\e>0}$ is tight in space~$C(0, T; H)$.
 \end{proof}

\begin{proof}[Proof of Theorem~\ref{thm:mixing}] For any two solutions~$v_1^\e$ and~$v_2^\e$, the It\^o formula yields
\begin{equation*}
\mathbb{E}|v_1^\e(t)-v_2^\e(t)|^2\leq
e^{-2(\lambda_1-C_g)t/\e}\mathbb{E}|v_1(0)-v_2(0)| \,,
\end{equation*}
which means the existence of a unique stationary solution~$\eta_u^\e$ for~(\ref{e:ave}) distributes as~$\mu_u$ such that for
any $v_0\in H$
\begin{equation}\label{e:stat}
\mathbb{E}|v^\e(t)-\eta^\e_u(t)|^2\leq
e^{-2(\lambda_1-C_g)t/\e}\mathbb{E}|v_0-\eta_u^\e(0)|^2 \,,
\end{equation}
which yields the exponential mixing. Moreover, since $|f'_y(x,
y)|\leq C_f$\,, we also have
 \begin{equation}\label{e:expmix2}
\mathbb{E}\Big |f(u, v^\e(t))-\int_Hf(u, x)\mu_u(dx)\Big|\leq
C(1+|v_0|^2)e^{-2(\lambda_1-C_g) t/\e } \,.
 \end{equation}

By the time scale transformation $t\rightarrow \tau=\e t$\,, (\ref{e:ave})
is transformed to
\begin{equation}\label{e:vslow}
 dv=\big[Av+g(u, v)\big]\,d\tau+
 \sigma_2\,d\tilde{W}_2(\tau)\,, \quad  v(0)=v_0 \,,
\end{equation}
where $\tilde{W}_2$ is the scaled version of~$W_2$ and with the same
distribution. Then the \spde~(\ref{e:vslow}) has a unique stationary solution~$\eta_u$ with distribution~$\mu_u$. And by the ergodic property of~$\mu_u$, we have
\begin{equation}\label{e:barf}
\bar{f}(u)=\lim_{t\rightarrow\infty}\frac{1}{t}\int_0^tf(u,
\eta_u(s))\,ds \,.
\end{equation}
Furthermore, by a generalized theorem on contracting maps depending on
a parameter~\cite[Appendix~C]{Cer01},~\cite{CerFre08},
$v_u(t)$~is differential with respect~$u$ with
\begin{equation}\label{e:Dv}
\sup_{u, v_0\in H,\ 0\leq t< \infty}|D_uv_u|_{\mathcal{L}(H)}\leq C
\end{equation}
for some positive constant $C$\,.
\end{proof}

We end this section by giving the following a priori estimates on
the solutions of the averaged
equation~(\ref{e:au1})--(\ref{e:au2})\,. First we need a local
Lipschitz property of~$\bar{f}$ which is yielded by~(\ref{e:Dv}) and
the following estimate, for any $u_1$\,, $u_1\in H_0^1$
\begin{eqnarray}
&& \frac{1}{\tau}\Big|\int_0^\tau [f(u_1, v_{u_1}(s))-f(u_2,
 v_{u_2}(s))\big]ds\Big|_H\nonumber\\
 &\leq & \frac{1}{\tau}\Big|\int_0^\tau[f(u_1, v_{u_1}(s))-f(u_2,
 v_{u_1}(s))\big]ds\Big|_H\nonumber
 \\ &&{}
 +\frac{1}{\tau}\Big|\int_0^\tau [f(u_2, v_{u_1}(s))-f(u_2,
 v_{u_2}(s))\big]ds\Big|_H\nonumber\\
 &\leq & 2\Big[\|u_1\|^2+\|u_2\|^2+
 \sup_{u, v_0\in H, \ 0\leq t<\infty}|D_uv(t)|_{\mathcal{L}(H)}\Big]|u_1-u_2| \,.\label{e:barf-Lip}
\end{eqnarray}

\begin{lemma}\label{lem:est-avu}
Assume $\textbf{H}_1$--$\textbf{H}_4$\,. For any $u_0\in H_0^1$, for
any $T>0$, (\ref{e:au1})--(\ref{e:au2}) has a unique solution $u\in
L^2(\Omega, C(0, T; H)\cap L^2(0, T; H_0^1))$\,.
 Moreover there is a positive constant $C_T$ such that for any positive
integer $m\geq 2$ and any $0\leq t\leq T$
\begin{equation}\label{e:u}
\mathbb{E}\|u(t)\|^m\leq C_T(1+\|u_0\|^m)\,.
\end{equation}
\end{lemma}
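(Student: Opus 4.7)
The plan is to combine standard \spde{} well-posedness with dissipativity estimates inherited from hypothesis~$\textbf{H}_1$ via averaging against~$\mu_u$.

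For existence and uniqueness, the drift $\bar f$ is locally Lipschitz on bounded subsets of~$H_0^1$ by~(\ref{e:barf-Lip}), and the additive noise has trace-class covariance~$\sigma_1^2 Q_1$. Truncating $\bar f$ at level~$N$ yields a globally Lipschitz drift for which Da Prato--Zabczyk~\cite{PZ92} provide a unique mild solution in $L^2(\Omega, C(0, T; H)\cap L^2(0, T; H_0^1))$; uniform-in-$N$ a priori estimates at the $H$-level then permit passage to the limit and identify the limit as the solution of~(\ref{e:au1})--(\ref{e:au2}).

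For the moment bound~(\ref{e:u}) I would proceed in two stages. At the $H$-level, apply It\^o's formula to~$|u|^{2m}$; averaging the pointwise inequality $f(x,y)x\leq -ax^2-bxy+c$ from~$\textbf{H}_1$ against~$\mu_u$ gives $\langle\bar f(u), u\rangle\leq -a|u|^2 - b\langle u, \bar v(u)\rangle + c|D|$ with $\bar v(u) = \int_H v\,\mu_u(dv)$. The cross term is absorbed using the stationary moment bound $\int_H|v|^2\mu_u(dv)\leq C(1+|u|^2)$, obtained by applying It\^o to~$|\eta_u|^2$, taking expectation in the stationary regime, and invoking~$\textbf{H}_2$ together with $b\geq e$ from~$\textbf{H}_3$. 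Gronwall's lemma together with Burkholder--Davis--Gundy then yields $\mathbb{E}\sup_{[0,T]}|u|^{2m}\leq C_T(1+|u_0|^{2m})$. At the $H_0^1$-level, applying It\^o via Galerkin approximation to $\|u\|^m=(\|u\|^2)^{m/2}$ produces the dissipative contribution $-m\|u\|^{m-2}|Au|^2$; Young's inequality absorbs the drift $m\|u\|^{m-2}\langle -Au, \bar f(u)\rangle$ into this, leaving $\tfrac12 m\|u\|^{m-2}|\bar f(u)|^2$ to handle. By Jensen's inequality and the growth bound in~$\textbf{H}_1$, $|\bar f(u)|^2\leq a\|u\|_{L^6}^6+b\int_H|v|^2\mu_u(dv)+c|D|$; in one space dimension, Gagliardo--Nirenberg gives $\|u\|_{L^6}^6\leq C|u|^4\|u\|^2$, so this super-linear part reduces to $C|u|^4\|u\|^m$, whose expectation is already controlled at the $H$-level. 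Gronwall then closes the bound.

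The main obstacle is precisely this closure of the $H_0^1$-energy inequality in the presence of super-linear growth of~$\bar f$ without losing the dissipation~$-|Au|^2$. It requires the simultaneous use of (i)~the stationary moment bound on~$\mu_u$ inherited from~$\textbf{H}_2$, (ii)~Jensen's inequality to transfer the pointwise growth estimate on~$f$ to~$\bar f$, and (iii)~a sharp one-dimensional Sobolev--Gagliardo--Nirenberg interpolation, which together reduce the problem to a linear Gronwall inequality driven by moments already controlled at the $H$-level.
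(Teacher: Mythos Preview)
Your proposal is correct and follows essentially the same approach as the paper: the paper applies It\^o's formula to $|u|^2$, transfers the dissipativity inequality from~$\textbf{H}_1$ to~$\bar f$ via the ergodic representation~(\ref{e:barf}) (equivalent to your averaging against~$\mu_u$), controls the cross term by the stationary second moment (the paper cites~(\ref{e:6}), you derive it directly), and then for the higher $H_0^1$~moments simply refers back to ``the same analysis of the proof of Theorem~\ref{thm:estimate}'', which is exactly the $\|u\|^{2m}$ energy computation with the embedding $H_0^1\subset L^6(D)$ that you spell out in more detail. Your write-up is more explicit about the interpolation step, but the strategy is the same.
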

\begin{proof}
Applying It\^o formula to $|u(t)|^2$ yields
\begin{equation*}
\frac{1}{2}\frac{d}{dt}|u(t)|^2=-\|u(t)\|^2+\langle \bar{f}(u(t)),
u(t)\rangle+ \sigma_1\langle u(t),
\dot{W}_1\rangle+\frac{\sigma_1^2}{2}\operatorname{tr}Q_1\,.
\end{equation*}
By~(\ref{e:barf}) and assumption $\textbf{H}_1$
\begin{eqnarray*}
\langle \bar{f}(u(t)), u(t)\rangle&=&\lim_{s\rightarrow \infty}
\frac{1}{s}\int_0^s\big\langle
 f(u(t),\eta_{u(t)}(\tau) ), u(t)\big\rangle\,d\tau\\
 &\leq & -a|u(t)|^2-b\langle \bar{\eta}_{u(t)}, u(t)\rangle+c\,.
\end{eqnarray*}
Then by the Gronwall lemma and (\ref{e:6}) there is some positive
constant~$C$ such that for any $t>0$
\begin{equation*}
\mathbb{E}|u(t)|^2\leq C(1+|u_0|^2)\,.
\end{equation*}
Now by the same analysis of the proof of Theorem \ref{thm:estimate}
we obtain (\ref{e:u})\,. Then by the above a priori estimates and
the local Lipschitz property of~$\bar{f}$, a standard
method~\cite{PZ92} yields the existence and uniqueness of~$u$.
\end{proof}

\section{Averaged equation} \label{sec:average}
This section gives the averaged equation and, as a byproduct, the
convergence rate is obtained. For this we consider our system
in a smaller probability space. By Theorem~\ref{thm:tight}, for any
$\kappa>0$ there is compact set~$K_\kappa$ in~$C(0, T; H)$ such that
\begin{displaymath}
\PX\{u^\e\in K_\kappa \}> 1-\kappa \,.
\end{displaymath}
Here $K_\kappa$ is chosen as a family of decreasing sets with
respect to~$\kappa$. Moreover, by the estimate~(\ref{e:Euv}) and
Markov inequality, we further choose the set~$K_\kappa$ such that
for $u^\e\in K_\kappa$
\begin{displaymath}
\|u^\e(t)\|^2\leq C_T^\kappa \,, \quad  t\in [0, T] \,,
\end{displaymath}
for some positive constant~$C_T^\kappa$.

\begin{proof}[Proof of Theorem~\ref{thm:averaging}]
Now we prove the rate of convergence. In order to do this, for any
$\kappa>0$ we introduce a new sub-probability space~$(\Omega_\kappa,
\mathcal{F}_\kappa, \PX_\kappa)$ defined by
\begin{equation*}
 \Omega_\kappa=\{\omega\in \Omega: u^\e(\omega)\in
K_\kappa\} \,, \quad \mathcal{F}_\kappa=\{S\cap \Omega_\kappa:
S\in\mathcal{F}\}
\end{equation*}
and
\begin{displaymath}
\PX_\kappa(S)=\frac{\PX(S\cap\Omega_\kappa)}{\PX(\Omega_\kappa)}
\quad \text{for} \quad  S\in\mathcal{F}_\kappa\,.
\end{displaymath}
Then $\PX(\Omega\setminus\Omega_\kappa)\leq\kappa$\,. In the
following we denote by~$\mathbb{E}_\kappa$ the expectation operator
with respect to~$\PX_\kappa$.

Now we restrict $\omega\in\Omega_\kappa$ and introduce an auxiliary
process. For any $T>0$\,, partition the interval~$[0, T]$ into
subintervals of length $\delta=\sqrt{\e}$\,. Then we construct
processes~$(\tilde{u}^\e, \tilde{v}^\e)$ such that for $t\in
[k\delta, (k+1)\delta)$,
 \begin{eqnarray}
\tilde{u}^\e(t)&=&e^{A(t-k\delta)}u^\e(k\delta)+
\int_{k\delta}^te^{A(t-s)}f(u^\e(k\delta),\tilde{v}^\e(s))\,ds
   \nonumber \\&& {} +\sigma_1\int_{k\delta}^te^{A(t-s)}dW_1(s) \,, \quad   \tilde{u}^\e(0)=u_0\label{e:tue}\\
d\tilde{v}^\e(t)&=&\frac{1}{\e}\big[B\tilde{v}^\e(t)+g(u^\e(k\delta),
\tilde{v}^\e(t))\big]\,dt+\frac{\sigma_2}{\sqrt{\epsilon}}\,dW_2(t)
\,, \nonumber\\&&  \tilde{v}^\e(k\delta)=v^\e(k\delta)\label{e:tve}
\,.
 \end{eqnarray}
By the It\^o formula for $t\in [k\delta, (k+1)\delta)$
\begin{eqnarray*}
&&\frac{1}{2}\frac{d}{dt}|v^\e(t)-\tilde{v}^\e(t)|^2\\ &\leq&
-\frac{1}{\e}(\lambda_1-L_g)|v^\e(t)-\tilde{v}^\e(t)|^2+
\frac{1}{\e}L_g|v^\e(t)-\tilde{v}^\e(t)||u^\e(t)-u^\e(k\delta)| \,.
\end{eqnarray*}
By the choice of~$\Omega_\kappa$, $K_\kappa$ is compact in space
$C(0,T;H)$, there is $C^\kappa_T>0$\,, such that
\begin{equation}\label{e:u1}
|u^\e(t)-u^\e(k\delta)|^2\leq C^\kappa_T\delta^2
\end{equation}
 for $t\in [k\delta,
(k+1)\delta)$. Then by the Gronwall lemma,
\begin{equation}\label{e:v1}
 |v^\e(t)-\tilde{v}^\e(t)|^2\leq C_T\delta^2 \,, \quad  t\in[0, T] \,.
\end{equation}
Moreover, by the choice of~$\Omega_\kappa$ and the assumption on the
growth of~$f(\cdot, v)$, $f(\cdot, v): H\rightarrow H^{-\beta} $ is
Lipschitz with $-1/2\leq\beta\leq -1/4$\,. Then we have for $t\in
[k\delta, (k+1)\delta)$
\begin{eqnarray*}
|u^\e(t)-\tilde{u}^\e(t)|&\leq&
 C_f\int_{k\delta}^t|v^\e(s)-\tilde{v}^\e(s)|\,ds+
C_fC^\kappa_\beta\int_{k\delta}^t|u^\e(k\delta)-u^\e(s)|\,ds \,,
\end{eqnarray*}
for some positive constant $C_\beta^\kappa$\,. So by
noticing~(\ref{e:u1}), we have
\begin{equation}\label{e:u2}
|u^\e(t)-\tilde{u}^\e(t)|\leq C^\kappa_T\delta \,, \quad  t\in[0, T]
\,.
\end{equation}
On the other hand, in the mild sense
\begin{equation*}
u(t)=e^{At}u_0+\int_0^te^{A(t-s)}\bar{f}(u(s))\,ds+
\sigma_1\int_0^te^{A(t-s)}\,dW_1(s) \,.
\end{equation*}
Then, using $\lfloor z\rfloor$~to denote the largest integer less than
or equal to~$z$,
\begin{eqnarray*}
| \tilde{u}^\e(t)-u(t)|&\leq&\int_0^te^{A(t-s)}\big|f(u^\e(\lfloor
s/\delta\rfloor\delta), \tilde{v}^\e(s))-\bar{f}(u^\e(\lfloor
s/\delta\rfloor\delta))\big|\,ds \\
&&{}+\int_0^te^{A(t-s)}\big|\bar{f}(u^\e(\lfloor
s/\delta\rfloor\delta)
)-\bar{f}(u^\e(s))\big|\,ds\\
&&{}+\int_0^te^{A(t-s)}\big|\bar{f}(u^\e(s))-\bar{f}(u(s))\big|\,ds
\,.
\end{eqnarray*}
Then by~(\ref{e:expmix2}), (\ref{e:barf}) and~(\ref{e:barf-Lip}) we
have for $t\in [0, T]$
\begin{equation}\label{e:u3}
|\tilde{u}^\e(t)-u(t)|\leq
C^\kappa_T\left[\delta+\int_0^T|u^\e(s)-u(s)|\,ds\right] \,.
\end{equation}
As
\begin{equation*}
|u^\e(t)-u(t)|\leq |u^\e(t)-\tilde{u}(t)|+|\tilde{u}(t)-u(t)|
\end{equation*}
by the Gronwall lemma and~(\ref{e:u1}), (\ref{e:u2}) and~(\ref{e:u3}) we have for $t\in [0, T]$,
\begin{equation}\label{e:u0}
|u^\e(t)-u(t)|\leq C^\kappa_T\sqrt{\e} \,.
\end{equation}
The proof of Theorem~\ref{thm:averaging} is complete.
\end{proof}

\section{Deviation estimate}\label{sec:deviation}
The previous section proved that for any $T>0$ in the sense of
probability
\begin{equation*}
    \sup_{t\in [0, T]}|u^\e(t)-u(t)|\leq C_T\sqrt{\e}
\end{equation*}
for some positive constant~$C_T$. Formally we should have the
following form $u^\e(t)=u(t)+\mathcal{O}(\e^{{1}/{2}})$\,. This
section determines the coefficient of $\e^{{1}/{2}}$, the
deviation.\\

\begin{proof}[Proof of Theorem~\ref{thm:deviation}]
We approximate the deviation~$z^\e$ defined by~(\ref{e:ze}) for
small $\e>0$. The deviation~$z^\e$ satisfies
\begin{equation}\label{e:zequ}
 \dot{z}^\e=A z^\e+\frac{1}{\sqrt{\e}}\big[f(u^\e, v^\e)-\bar{f}(u)\big]
\end{equation}
with $z^\e(0)=0$\,. By the assumption on~$f$ we have
\begin{eqnarray}
 \frac{1}{2}\frac{d}{dt}|z^\e|^2
  &\leq &-\|z^\e\|^2+C_f|z^\e|^2+
 \frac{1}{\sqrt{\e}}C_f|v^\e-\eta^\e_u||z^\e|\nonumber\\&&{}+
  \frac{1}{\sqrt{\e}}\langle f(u, \eta^\e_u)-\bar{f}(u),
  z^\e\rangle \,. \label{e:f}
\end{eqnarray}
Then the Gronwall lemma yields that for any $T>0$\,,
\begin{equation}\label{e:Ez}
\mathbb{E}\sup_{0\leq t\leq
T}|z^\e(t)|^2+\mathbb{E}\int_0^T\|z^\e(t)\|^2dt\leq
C_T(|u_0|^2+|v_0|^2) \,.
\end{equation}
In the mild sense we write
 \begin{displaymath}
z^\e(t)=\frac{1}{\sqrt{\e}}\int_0^te^{A(t-r)}[f(u^\e(r),
v^\e(r))-\bar{f}(u(r))]\,dr.
 \end{displaymath}
Then for any $0\leq s<t$\,, by the property of~$e^{At}$, we have for
some positive $1>\delta>0$
\begin{eqnarray*}
|z^\e(t)-z^\e(s)|&\leq& \frac{1}{\sqrt{\e}}\left|\int_0^te^{A(t-r)}
f(u^\e(r), v^\e(r))-\bar{f}(u(r))\,dr
\right.\\ &&\left.{}-\int_0^se^{A(s-r)}
f(u^\e(r), v^\e(r))-\bar{f}(u(r) )\,dr\right|\\
&\leq& C_T|t-s|^\delta\frac{1}{\sqrt{\e}}\big|f(u^\e,
v^\e)-\bar{f}(u)\big|_{L^2(0, T; H)} \,.
\end{eqnarray*}
By the assumption~$\mathbf{H}_1$, Theorem~\ref{thm:estimate} and~(\ref{e:Ez})
\begin{displaymath}
\mathbb{E}\frac{1}{\sqrt{\e}}\big|f(u^\e,
v^\e)-\bar{f}(u)\big|_{L^2(0, T; H)}\leq C_T(|u_0|^2+|v_0^2|)\,.
\end{displaymath}
Then we have
\begin{equation}\label{e:zCtheta}
\mathbb{E}|z^\e(t)|_{C^\delta(0, T; H)}\leq C_T(|u_0|^2+|v_0|^2) \,.
\end{equation}
Here $C^\delta(0, T; H)$ is the H\"older space  with exponent~$\delta$.
 On the other hand, also by the property of~$e^{At}$, we have
for some positive constant~$C_{T, \alpha}$ and for some $1>\alpha>0$
\begin{eqnarray*}
&&|z^\e(t)|_{H^\alpha}\leq\frac{1}{\sqrt{\e}}\int_0^t(t-s)^{-\alpha/2}
\big|f(u^\e(s), v^\e(s))-\bar{f}(u(s))\big|_H\,ds \nonumber\\
&\leq& C_{T, \alpha}\frac{1}{\sqrt{\e}}\big|f(u^\e,
v^\e)-\bar{f}(u)\big|_{L^2(0, T; H)} \,.
\end{eqnarray*}
Then
\begin{equation}\label{e:zCtheta2}
\mathbb{E}\sup_{0\leq t\leq T}|z^\e(t)|_{H^\alpha}\leq C_{T,
\alpha}(|u_0|^2+|v_0^2|) \,.
\end{equation}
And by the compact embedding of $C^\delta(0, T; H)\cap C(0, T;
H^\alpha)\subset C(0, T; H)$, $\{\nu^\e\}_\e$\,, the distribution
of~$\{z^\e\}_\e$\,, is tight in~$C(0, T; H)$.

 Divide~$z^\e$ into $z_1^\e+z_2^\e$ which solves
\begin{equation}\label{e:z1}
\dot{z}_1^\e=A z_1^\e+\frac{1}{\sqrt{\e}}[f(u,
\eta_u^\e)-\bar{f}(u)] \,, \quad  z_1(0)^\e=0
\end{equation}
and
\begin{equation}\label{e:z2}
\dot{z}_2^\e=A z_2^\e+\frac{1}{\sqrt{\e}}[f(u^\e, v^\e)-f(u,
\eta_u^\e)] \,, \quad z_2^\e(0)=0
\end{equation}
respectively and consider~$z_1^\e$ and~$z_2^\e$ separately. We
follow a martingale approach~\cite{Kes79, Wata88}. Denote
by~$\nu_1^\e$ be the probability measure of~$z_1^\e$ induced on
space~$C(0,T; H)$.  For $\gamma>0$\,, denote by~$UC^\gamma(H, \R)$
the space of all functions from~$H$ to~$\R$ which, together with all
Fr\'echet derivatives to order~$\gamma$, are
uniformly continuous. For $h\in UC^\gamma(H, \R)$\,, denote by $h'$
and $h''$ the first and second order Fr\'echet derivative. Then we
have the following lemma.
\begin{lemma}\label{lem:mart}
Assume $\mathbf{H}_1$--$\mathbf{H}_4$. Any limiting measure of~$\nu_1^\e$, denote by~$P^0$, solves the following martingale problem
on~$C(0, T; H)$: $P^0\{z_1(0)=0\}=1$\,,
\begin{displaymath}
h(z_1(t))-h(z_1(0))-\int_0^t \langle h'(z_1(\tau)),
Az_1(\tau)\rangle\,d\tau-\frac{1}{2}\int_0^t
 \operatorname{tr}\big[h''(z_1(\tau))B(u)\big ]\,d\tau
\end{displaymath}
is a $P^0$ martingale for any $h\in UC^2(H, \R)$. Here
\begin{displaymath}
B(u) = 2\int_0^\infty \mathbb{E} \big[(f(u,
\eta_u(t))-\bar{f}(u))\otimes (f(u, \eta_u(0))-\bar{f}(u))\big]\,dt
\end{displaymath}
and~$\otimes$ denotes the tensor product.
\end{lemma}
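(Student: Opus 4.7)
The plan is to apply the perturbed test function method of Papanicolaou--Stroock--Varadhan to the joint slow--fast Markov process $(z_1^\e, \eta_u^\e)$. Its generator acts on test functions $H(z, v)$ as
\begin{equation*}
\mathcal{G}^\e H(z, v) = \langle H'_z, Az\rangle + \frac{1}{\sqrt{\e}}\langle H'_z, \phi(u, v)\rangle + \frac{1}{\e}\mathcal{L}^u H(z, \cdot)(v),
\end{equation*}
where $\phi(u, v) := f(u, v) - \bar{f}(u)$ and $\mathcal{L}^u$ is the Kolmogorov generator of the frozen fast equation~(\ref{e:vslow}). The aim is to absorb the singular $\e^{-1/2}$ and $\e^{-1}$ contributions by constructing a corrected test function of the form $H^\e(z, v) = h(z) + \sqrt{\e}\, h_1(z, v) + \e\, h_2(z, v)$ so that $\mathcal{G}^\e H^\e$ differs from the candidate limit generator $\mathcal{G}h(z) = \langle h'(z), Az\rangle + \tfrac{1}{2}\operatorname{tr}[h''(z)B(u)]$ only by a residue that vanishes with~$\e$.

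First I would take $h_1(z, v) = \langle h'(z), \chi(u, v)\rangle$, where $\chi(u, \cdot)$ solves the Poisson equation $\mathcal{L}^u \chi(u, \cdot) = -\phi(u, \cdot)$. The exponential mixing of Theorem~\ref{thm:mixing} combined with the centering $\int \phi(u, v)\,\mu_u(dv) = 0$ supplies the representation
\begin{equation*}
\chi(u, v) = \int_0^\infty \mathbb{E}\bigl[\phi(u, \eta_u(s)) \bigm| \eta_u(0) = v\bigr]\,ds,
\end{equation*}
with uniform-in-$u$ bounds on $\chi$ and its $v$-derivatives inherited from~(\ref{e:stat}) and~(\ref{e:Dv}); this choice cancels the $\e^{-1/2}$ term in $\mathcal{G}^\e H^\e$. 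Next I would define $h_2(z, v)$ as the solution of
\begin{equation*}
\mathcal{L}^u h_2(z, \cdot) = \tfrac{1}{2}\operatorname{tr}[h''(z)B(u)] - \langle h''(z)\chi(u, v), \phi(u, v)\rangle,
\end{equation*}
whose right-hand side has zero $\mu_u$-mean. Indeed, unfolding the representation of $\chi$, stationarity gives $\int \langle h''(z)\chi(u, v), \phi(u, v)\rangle\,\mu_u(dv) = \int_0^\infty \mathbb{E}_{\mu_u}\langle h''(z)\phi(u, \eta_u(s)), \phi(u, \eta_u(0))\rangle\,ds$, which equals $\tfrac{1}{2}\operatorname{tr}[h''(z)B(u)]$ by the definition of~$B(u)$. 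Hence the Fredholm alternative supplies~$h_2$.

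With these choices a direct computation gives $\mathcal{G}^\e H^\e(z, v) = \mathcal{G}h(z) + R^\e(z, v)$, where $|R^\e(z, v)| \le C\sqrt{\e}(1 + |v|^k)$ for some integer~$k$. Since
\begin{equation*}
M^\e(t) := H^\e(z_1^\e(t), \eta_u^\e(t)) - \int_0^t \mathcal{G}^\e H^\e(z_1^\e(s), \eta_u^\e(s))\,ds
\end{equation*}
is an $\mathcal{F}_t$-martingale, and since $\|\sqrt{\e}h_1 + \e h_2\|_{L^1} \to 0$ by the moment bounds of Theorem~\ref{thm:estimate} on~$\eta_u^\e$, I would then take $\e \to 0$ along the tight family $\{\nu_1^\e\}$ (tightness already following from~(\ref{e:zCtheta})--(\ref{e:zCtheta2})). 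For any bounded continuous $\mathcal{F}_s$-measurable test $\Psi$, conditioning on $\mathcal{F}_s$ and passing to the limit yields the claimed martingale identity for every weak limit~$P^0$, and the initial condition $P^0\{z_1(0) = 0\} = 1$ is immediate from $z_1^\e(0) = 0$.

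The principal obstacle is securing enough Fr\'echet regularity of $\chi(u, \cdot)$ and $h_2(z, \cdot)$ in the infinite-dimensional variable~$v$ to justify the It\^o formula underlying the generator identity, together with polynomial-in-$|v|$ control of the residue $R^\e$. The exponential decay in~(\ref{e:stat}), the parameter-derivative bound~(\ref{e:Dv}), the derivative assumptions on $f$ in~$\mathbf{H}_1$, and the trace-class hypothesis on~$Q_2$ in~$\mathbf{H}_4$ provide all the ingredients, but the careful accounting in the Hilbert-space setting is where the technical weight lies.
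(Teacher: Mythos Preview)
Your approach is sound in spirit but takes a genuinely different route from the paper. The paper follows the direct expansion of Kesten--Papanicolaou and Watanabe: starting from $h(z_1^\e(t))-h(z_1^\e(s))=\int_s^t\langle h'(z_1^\e),\dot z_1^\e\rangle\,d\tau$, it iterates the identity $h'(z_1^\e(\tau))=h'(z_1^\e(t))-\int_\tau^t h''(z_1^\e(\delta))\dot z_1^\e(\delta)\,d\delta$ on the singular $\e^{-1/2}$ term, producing a double time integral whose $\e^{-1}$ piece, after freezing $u(\delta)\approx u(\tau)$ and using the stationary two-time correlation $b^{ij}_{u(\tau)}((\delta-\tau)/\e)$, converges to $\tfrac12\int_s^t\operatorname{tr}[h''(z_1)B(u)]\,d\tau$; all remaining pieces are shown to vanish via the exponential mixing bound~(\ref{e:expmix2}). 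No Poisson equation is ever solved. Your corrector method instead encodes the same cancellation through $\chi$ and $h_2$, which is algebraically cleaner but forces you to prove $C^2$-regularity of the infinite-dimensional Poisson solutions in $v$---precisely the burden the paper's iteration sidesteps. One point you should address explicitly: since $u=u(t)$ is time-varying (and random, driven by $W_1$), the pair $(z_1^\e,\eta_u^\e)$ is not autonomous with the generator you wrote; your corrector $h_1=\langle h'(z),\chi(u(t),v)\rangle$ picks up an additional $\sqrt{\e}\,\langle h'(z),D_u\chi\cdot\dot u\rangle$ term under It\^o's formula, which is harmless (it is $O(\sqrt\e)$ by~(\ref{e:Dv})) but must be accounted for. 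The paper handles the analogous issue by comparing $A_{ij}^\e(\delta,\tau)$ with a frozen version $\overline{A}_{ij}^\e(\delta,\tau)$ in which $u(\delta)$ is replaced by $u(\tau)$.
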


\begin{proof}
We follow a martingale approach~\cite{Kes79, Wata88}\,. For any
$0<s\leq t<\infty$ and $h\in UC^\infty(H)$ we have
\begin{eqnarray*}
&&h(z_1^\e(t))-h(z_1^\e(s))= \int_s^t\langle
h'(z_1^\e(\tau)),\frac{dz_1^\e}{dt}\rangle\,d\tau\\&=&
\int_s^t\langle h'(z_1^\e(\tau)), Az_1^\e(\tau)\rangle\,d\tau+
\frac{1}{\sqrt{\e}}\int_s^t\langle h'(z_1^\e(\tau)), f(u(\tau),
\eta^\e_u(\tau))-\bar{f}(u(\tau)) \rangle\,d\tau \,.
\end{eqnarray*}
Rewrite the second term as
 \begin{eqnarray*}
&&\frac{1}{\sqrt{\e}}\int_s^t\langle h'(z_1^\e(\tau)), f(u(\tau),
\eta^\e_u(\tau))-\bar{f}(u(\tau))\rangle \, d\tau \\
&=& \frac{1}{\sqrt{\e}}\int_s^t\langle h'(z_1^\e(t)), f(u(\tau),
\eta^\e_u(\tau))-\bar{f}(u(\tau)) \rangle\, d\tau\\
&&{}-\frac{1}{\sqrt{\e}}\int_s^t\int_\tau^t  h''(z_1^\e(\delta))
\Big[f(u(\tau), \eta^\e_u(\tau))-\bar{f}(u(\tau))), Az_1^\e(\delta)\Big]\, d\delta \,d\tau\\
&&{}-\frac{1}{\e}\int_s^t\int_\tau^t
h''(z_1^\e(\delta))\Big[(f(u(\tau),
\eta^\e_u(\tau))-\bar{f}(u(\tau))), f(u(\delta),
\eta^\e_u(\delta))\\&&\quad{}-\bar{f}(u(\delta)) \Big]\, d\delta\,
d\tau\\&=&L_1+L_2+L_3
\end{eqnarray*}
where $L_1$, $L_2$ and~$L_3$ denote the separate lines of the
right-hand side of the above equation, respectively. Let
$\{e_i\}_{i=1}^\infty$ be one eigenbasis of~$H$, then
\begin{eqnarray*}
&&h''(z_1^\e(\delta))\Big((f(u(\tau),
\eta^\e_u(\tau))-\bar{f}(u(\tau))), f(u(\delta),
\eta^\e_u(\delta))-\bar{f}(u(\delta))
\Big)\\&=&\sum^\infty_{i,j=1}\p_{ij}h(z^\e_1(\delta))\big\langle
(f(u(\tau), \eta^\e_u(\tau))-\bar{f}(u(\tau)))
\\ && \hspace{6cm}\otimes(f(u(\delta), \eta^\e_u(\delta))-\bar{f}(u(\delta))),  e_i\otimes e_j
\big\rangle \,.
\end{eqnarray*}
Here $\p_{ij}=\p_{e_i}\p_{e_j}$ where $\p_{e_i}$~is the directional
derivative in direction~$e_i$ and $\otimes$  denotes the tensor
product.

Denote by $A_{ij}^\e(\delta, \tau)=\big\langle(f(u(\tau),
\eta^\e_{u(\tau)}(\tau))-\bar{f}(u(\tau))) \otimes(f(u(\delta),
\eta^\e_{u(\delta)}(\delta))-\bar{f}(u(\delta))), e_i\otimes
e_j\big\rangle$\,. Then we have
\begin{eqnarray*}
 L_3\hspace{-0.2cm}&=&\hspace{-0.2cm}-\frac{1}{\e}\sum_{ij}\int_s^t\int_\tau^t
 \p_{ij}h(z_1^\e(\delta))A_{ij}^\e(\delta,\tau)\, d\delta\, d\tau\\
\hspace{-0.2cm}&=&\hspace{-0.2cm}-\frac{1}{\e}\sum_{ij}\int_s^t\int_\tau^t
\int_\delta^t\big\langle \p_{ij}h'(z_1^\e(\lambda)),
\big[Az_1^\e(\lambda)+\frac{1}{\sqrt{\e}}(f(u(\lambda),
\eta_{u(\lambda)}^\e(\lambda))-\bar{f}(u(\lambda)))\big]\big\rangle
\\&&\qquad{}\times \tilde{A}_{ij}^\e(\delta,\tau)\,
 d\lambda\, d\delta\, d\tau
\\&&{}
 +\frac{1}{\e}\sum_{ij}\int_s^t\int_\tau^t\p_{ij}h(z_1^\e(t))\tilde{A}_{ij}^\e
(\delta,\tau)\, d\delta\,
d\tau\\&&{}+\frac{1}{\e}\sum_{ij}\int_s^t\int_s^\tau\p_{ij}h(z_1^\e(\tau))
\mathbb{E}[A_{ij}^\e
(\delta,\tau)]\, d\delta \, d\tau\\
\hspace{-0.2cm}&=&L_{31}+L_{32}+L_{33}
\end{eqnarray*}
with $\tilde{A}_{ij}^\e(\delta, \tau)=A_{ij}^\e(\delta,
\tau)-\mathbb{E}[A_{ij}^\e(\delta, \tau)]$. For our purpose, for any
bounded continuous function~$\Phi$ on~$C(0,s; H)$, let $\Phi(\cdot,
\omega)=\Phi(z_1^\e(\cdot, \omega))$.
  Then by~(\ref{e:expmix2}), we have the following estimate
\begin{eqnarray*}
|\mathbb{E}[(L_{31}+L_{32})\Phi]|\rightarrow 0  \text{ as } \e\rightarrow 0 \,.
\end{eqnarray*}
Now we determine the limit of
$\int_s^\tau\mathbb{E}A^\e_{ij}(\delta, \tau)\,d\delta$ as
$\e\rightarrow 0$\,. Notice that $\eta_{u}$ depends on $u$\,,
$A^\e_{ij}(\delta, \tau)$ is not a stationary process
 for fixed $\tau$\,. For this introduce
\begin{equation*}
\overline{A}_{ij}^\e(\delta, \tau)=\left\langle(f(u(\tau),
\eta^\e_{u(\tau)}(\tau))-\bar{f}(u(\tau))) \otimes(f(u(\tau),
\eta^\e_{u(\tau)}(\delta))-\bar{f}(u(\tau))), e_i\otimes
e_j\right\rangle\,.
\end{equation*}
Then
\begin{eqnarray*}
&&\Big|\int_s^\tau \mathbb{E}[A^\e_{ij}(\delta,
\tau)-\overline{A}^\e_{ij}(\delta,
\tau)]\,d\delta\Big|\\&\leq&\int_s^\tau \mathbb{E}\Big|\langle
f(u(\tau), \eta^\e_{u(\tau)}(\tau))-\bar{f}(u(\tau)), e_i\rangle
\big[\langle f(u(\delta),
\eta^\e_{u(\delta)}(\delta))-\bar{f}(u(\delta)), e_j\rangle\\&&{}-
\langle f(u(\tau), \eta^\e_{u(\tau)}(\delta))-\bar{f}(u(\tau)),
e_j\rangle \big]\Big|\,d\delta\\
&=& \int_s^\tau\mathbb{E}\Big|\langle f(u(\tau),
\eta^\e_{u(\tau)}(\tau))-\bar{f}(u(\tau)), e_i\rangle \big[\langle
f(u(\delta), \eta^\e_{u(\delta)}(\delta))-f(u(\delta),
\eta^\e_{u(\tau)}(\delta)), e_j\rangle\\&&{}+\langle f(u(\delta),
\eta^\e_{u(\tau)}(\delta))-f(u(\tau), \eta^\e_{u(\tau)}(\delta)),
e_j\rangle \big]\Big|\,d\delta\,.
\end{eqnarray*}
By the assumption $\textbf{H}_1$ and (\ref{e:Dv}) we have
\begin{eqnarray*}
&&\big|\langle f(u(\delta),
\eta^\e_{u(\delta)}(\delta))-f(u(\delta),
\eta^\e_{u(\tau)}(\delta)), e_j\rangle\big|\\&\leq&
C_f|\eta^\e_{u(\delta)}(\delta)-\eta^\e_{u(\tau)}
(\delta)\big||e_j|\\
&\leq&C_fC|e_j|
\end{eqnarray*}
and by Lemma \ref{lem:est-avu}
\begin{eqnarray*}
&&\big|\langle f(u(\delta), \eta^\e_{u(\tau)}(\delta))-f(u(\tau),
\eta^\e_{u(\tau)}(\delta)), e_j\rangle\big|\\&\leq&
\big[\|u(\delta)\|^2+\|u(\tau)\|^2 \big]|u(\delta)-u(\tau)|
|e_j|\\&\leq&C_T(1+\|u_0\|^3)|e_j| \,.
\end{eqnarray*}
Then by~(\ref{e:expmix2})
\begin{equation*}
\Big|\frac{1}{\e}\int_s^\tau \mathbb{E}[A^\e_{ij}(\delta,
\tau)-\overline{A}^\e_{ij}(\delta, \tau)]\,d\delta\Big|\rightarrow
0\,, \quad \text{as}\; \e\rightarrow 0\,.
\end{equation*}

Now for fixed $\tau$\,, since $\eta_{u(\tau)}(t)$ is stationary
correlated, we put
\begin{eqnarray*}
b^{ij}_{u(\tau)}(\delta-\tau
)&=&\mathbb{E}\big[\big\langle\big(f(u(\tau),
\eta_{u(\tau)}(\delta))-\bar{f}(u(\tau))\big) \\&&{}\otimes\big(
f(u(\tau), \eta_{u(\tau)}(\tau))-\bar{f}(u(\tau))\big), e_i\otimes
e_j \big\rangle \big] \,.
\end{eqnarray*}
Then we have
\begin{equation*}
\mathbb{E}\left[\overline{A}_{ij}^\e(\delta,
\tau)\right]=b^{ij}_{u(\tau)}\Big(\frac{\delta-\tau}{\e}\Big) \,.
\end{equation*}
Further by the exponential mixing property, for any fixed
$\delta>\tau$
\begin{equation*}
\int_0^{(\delta-\tau)/\e}b^{ij}_{u(\tau)}(\lambda) \,
d\lambda\rightarrow \int_0^\infty b^{ij}_{u(\tau)}(\lambda) \,
d\lambda=:\frac{1}{2}B_{ij}(u(\tau))\,, \quad \e\rightarrow 0\,.
\end{equation*}
Then, if $\e_n\rightarrow 0$ as $n\rightarrow\infty$\,,
$\nu^{\e_n}\rightarrow P^0 $\,,
\begin{equation*}
\lim_{n\rightarrow\infty}\mathbb{E}[L_3\Phi] =\frac{1}{2}
\int_s^t\mathbb{E}^{P^0}\Big(
\operatorname{tr}\big[h''(z_1(\tau))B(u(\tau))\big]\Phi\Big) d\tau
\,,
\end{equation*}
where
\begin{equation*}
B(u)=\sum_{i, j}B_{ij}(u)(e_i\otimes e_j)\,.
\end{equation*}
Moreover by the assumption on $f$ and the estimates of Lemma
~\ref{lem:est-avu}, $B(u):~H\rightarrow H$ is Hilbert--Schmidt.

 Similarly by~(\ref{e:expmix2})
\begin{equation*}
\mathbb{E}[L_1\Phi+L_2\Phi]\rightarrow 0 \text{ as }  \e\rightarrow 0 \,.
\end{equation*}
By the tightness of~$z^\e$ in~$C(0, T; H)$, the sequence~$z_1^{\e_n}$ has a limit process, denote by~$z_1$, in weak sense.
Then
\begin{equation*}
\lim_{n\rightarrow\infty}\mathbb{E} \Big[\int_s^t\langle
h'(z_1^{\e_n}(\tau)), Az_1^{\e_n}(\tau)\rangle\Phi
\, d\tau\Big]=\mathbb{E}\Big[\int_s^t \langle h'(z_1(\tau)),
Az_1(\tau)\rangle \Phi \, d\tau \Big]
\end{equation*}
and
\begin{equation*}
\lim_{n\rightarrow\infty}\mathbb{E}\big[\big(h(z_1^{\e_n}(t))-
h(z_1^{\e_n}(s))\big)\Phi\big]=\mathbb{E}\big[\big(h(z_1(t))-h(z_1(s))\big)\Phi\big] \,.
\end{equation*}
At last we have
\begin{eqnarray}\label{e:z1mart}
&&\mathbb{E}^{P^0}\big[\big(h(z_1)(t)-h(z_1(s))\big)\Phi\big]\\
&=&\mathbb{E}^{P^0}\Big[\int_s^t\langle h'(z_1(\tau)),
Az_1(\tau)\rangle \Phi \, d\tau
\Big]\nonumber\\&&{}+\frac{1}{2}\mathbb{E}^{P^0}\left\{\int_s^t
\operatorname{tr} \big[h''(z_1(\tau))B(u(\tau))\big]\Phi \, d\tau
\right\}.\nonumber
\end{eqnarray}
By an approximation argument we can prove~(\ref{e:z1mart}) holds for
all $h\in UC^2(H)$. This completes the proof.
\end{proof}

We need a lemma on the martingale problem. First introduce some
notation. Suppose~$\mathfrak{A}$ is a generator of bounded analytic
compact semigroup~$S(t)$. $\mathfrak{F}: [0, T]\times H\rightarrow
H$ and $\mathfrak{B}: [0, T]\times H\rightarrow \mathcal{L}(H)$ is~$\mathcal{B}(H)$ and $\mathcal{B}(\mathcal{L}_2(H))$ measurable and
bounded. Let $\mathfrak{L}_t$~be a second order Kolmogorov diffusion
operator of the form
\begin{equation*}
\mathfrak{L}_tF=\frac{1}{2} \operatorname{tr}[
\mathfrak{B}Q\mathfrak{B}^*F_{zz}]+\langle \mathfrak{A}z, F_z
\rangle+\langle \mathfrak{F}(z), F_z \rangle
\end{equation*}
for any bounded continuous function~$F$ on~$H$ with first and second
order Fr\'echet derivatives. Then we have the following
result~\cite{MM88}.
\begin{lemma}For any $T>0$\,,
$F(X(t))-F(X(0))-\int_0^t\mathfrak{L}_sF(X(s))\,ds$ is a  $P^0$
  martingale on space $C(0,T; H)$ if and only if the following
  equation
\begin{equation*}
dX(t)=\left[\mathfrak{A}X(t)+\mathfrak{F}(X(t))\right]\,dt+
\mathfrak{B}\,dW(t)
\end{equation*}
has a weak solution $(\tilde{\Omega},
\{\tilde{\mathcal{F}}_t\}_{0\leq t\leq T}, \tilde{\mathbb{P}},
\tilde{X}(t), \tilde{W}(t))$ such that $P^0$ is the image measure of
$\tilde{\mathbb{P}}$ by $\omega\rightarrow \tilde{X}(\cdot,
\omega)$\,.
\end{lemma}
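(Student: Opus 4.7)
The plan is to prove this equivalence by handling the two implications separately, using the standard Stroock--Varadhan formulation adapted to Hilbert-space-valued diffusions driven by the semigroup generator~$\mathfrak{A}$.

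For the ``weak solution $\Rightarrow$ martingale problem'' direction, I would take any weak solution $(\tilde\Omega,\{\tilde{\mathcal F}_t\},\tilde{\mathbb P},\tilde X,\tilde W)$ and apply the $H$-valued It\^o formula to $F(\tilde X(t))$, obtaining
\begin{equation*}
F(\tilde X(t))-F(\tilde X(0))=\int_0^t\mathfrak{L}_sF(\tilde X(s))\,ds+\int_0^t\langle F_z(\tilde X(s)),\mathfrak{B}\,d\tilde W(s)\rangle.
\end{equation*}
Since $\mathfrak B$ is bounded and $F_z$ is uniformly continuous, the stochastic integral term is a centered $\tilde{\mathbb P}$-martingale, and pushing forward along the path-valued map $\omega\mapsto \tilde X(\cdot,\omega)$ transports this martingale property to the coordinate process on $C(0,T;H)$ under the image measure $P^0$.

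The converse is the substantive direction: starting from $P^0$, I must reconstruct a driving noise. Specializing to linear test functions $F(z)=\langle h,z\rangle$ with $h\in D(\mathfrak A^*)$ shows that
\begin{equation*}
M^h(t)=\langle h,X(t)\rangle-\langle h,X(0)\rangle-\int_0^t\langle\mathfrak A^*h,X(s)\rangle\,ds-\int_0^t\langle h,\mathfrak F(X(s))\rangle\,ds
\end{equation*}
is a real continuous $P^0$-martingale, while applying the martingale problem to $F(z)=\langle h,z\rangle\langle k,z\rangle$ (and subtracting the first-order contributions) identifies the cross variation as $\langle M^h,M^k\rangle_t=\int_0^t\langle h,\mathfrak B Q\mathfrak B^* k\rangle\,ds$. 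Hence $M(t):=X(t)-X(0)-\int_0^t[\mathfrak A X(s)+\mathfrak F(X(s))]\,ds$ is an $H$-valued continuous martingale with operator quadratic variation $\int_0^t\mathfrak B Q\mathfrak B^*\,ds$, and the infinite-dimensional martingale representation theorem yields, on a possibly enlarged filtered probability space, a cylindrical $Q$-Wiener process $\tilde W$ such that $M(t)=\int_0^t\mathfrak B\,d\tilde W(s)$. This rearranges into the required weak solution whose law is~$P^0$.

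The main obstacle is the unboundedness of $\mathfrak A$: the convenient linear test functions lie in the domain of $\mathfrak{L}_t$ only when $h\in D(\mathfrak A^*)$, so one must first establish the martingale identities on this dense class and then approximate general $F\in UC^2(H,\R)$ by cylindrical functions over $D(\mathfrak A^*)$, interchanging limits with the semigroup regularization carefully. A secondary technical issue is the enlargement of the probability space required when $\mathfrak B\sqrt{Q}$ fails to be surjective, so that an independent Wiener process on the orthogonal complement of the range must be adjoined; this is precisely the setting treated in~\cite{MM88}, which justifies invoking the lemma in the form stated.
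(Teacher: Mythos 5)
The paper does not actually prove this lemma: it is quoted directly from M\'etivier~\cite{MM88} (``Then we have the following result~\cite{MM88}''), so there is no internal argument to compare yours against. Your proposal reconstructs the standard proof underlying that reference, and the skeleton is right: the It\^o formula gives the easy implication, and for the converse one tests the martingale property against linear functionals $\langle h,\cdot\rangle$ with $h\in D(\mathfrak{A}^*)$ and against products $\langle h,\cdot\rangle\langle k,\cdot\rangle$ to identify the drift and the operator quadratic variation $\mathfrak{B}Q\mathfrak{B}^*$, then invokes the infinite-dimensional martingale representation theorem on an enlarged filtered space. Two technical points deserve more care than your sketch gives them. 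First, in the forward direction a weak (mild) solution need not take values in $D(\mathfrak{A})$, so you cannot apply the It\^o formula to a general $F\in UC^2(H,\R)$ directly; you must first work with cylindrical functions built over $D(\mathfrak{A}^*)$ (or Yosida-approximate $\mathfrak{A}$) and pass to the limit --- the same approximation you defer to the converse is already needed here. Second, the linear and quadratic test functions on which your converse relies do not belong to $UC^2(H,\R)$ (they are not uniformly continuous on all of $H$), so extracting $M^h$ and its bracket from the martingale problem as stated requires a truncation and localization step using a priori bounds on the coordinate process. Neither issue is fatal --- both are handled in \cite{MM88} and in \cite{PZ92} --- but they are exactly the places where a careless write-up of this equivalence goes wrong.
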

By the uniqueness of solution, the limit of~$\nu_1^\e$, denote
by~$P^0$,  is unique and  solves the martingale
 problem related to the following stochastic partial differential equation
\begin{equation}\label{e:limit-z1}
dz_1=Az_1\,dt+ \sqrt{B(u)} \, d\overline{W} \,,
\end{equation}
where $\overline{W}(t)$ is cylindrical Wiener process with trace
operator $Q=\operatorname{Id}_H$, identity operator on~$H$, defined
on a probability space~$(\bar{\Omega}, \bar{\mathcal{F}},
\bar{\PX})$ such that $z^\e_1$~converges in probability $\bar{\PX}$
to~$z_1$ in~$C(0, T; H)$.

On the other hand, the distribution of~$z^\e_2$ on~$C(0, T; H)$ is
also tight. Suppose $z_2$~is one weak limit point of~$z^\e_2$ in~$C(0, T; H)$. We determine the equation satisfied by~$z_2$. From~(\ref{e:z2})
\begin{eqnarray*}
 \dot{z}_2^\e=Az_2^\e+\frac{1}{\sqrt{\e}}\left[f(u^\e, v^\e)-f(u^\e,
 \eta_u^\e)\right]+f'_u(\tilde{u}^\e, \eta_u^\e)z^\e
\end{eqnarray*}
for the convex combination $\tilde{u}^\e=\theta u^\e+(1-\theta)
u$\,, $\theta\in(0, 1)$.
 By assumption~$\mathbf{H}_1$ and~(\ref{e:stat})
\begin{equation*}
\frac{1}{\sqrt{\e}}\mathbb{E}|f(u^\e, v^\e)-f(u^\e,
 \eta_u^\e)|\leq
 \frac{1}{\sqrt{\e}}C_f\mathbb{E}|v^\e-\eta_u^\e|\rightarrow
 0 \text{ as } \e\rightarrow 0 \,.
\end{equation*}
And for any $h\in H^2$,
\begin{eqnarray*}
&&\mathbb{E}\langle f'_u(\tilde{u}^\e,
\eta_u^\e)z^\e-\overline{f'_u}(u) z, h\rangle\\&=&\mathbb{E}
\big\langle\big[ f'_u(\tilde{u}^\e, \eta_u^\e)-f'_u(u,
\eta_u^\e)\big]z^\e, h\big\rangle+\mathbb{E}\big\langle\big[ f'_u(u,
\eta_u^\e)-\overline{f'_u}(u)\big]z^\e, h\big\rangle
\\&&{}+ \mathbb{E}\big\langle \overline{f'_u}(u)z^\e-\overline{f'_u}(u)z,
h\big\rangle\\&\rightarrow& 0 \text{ as }  \e\rightarrow 0 \,,
\end{eqnarray*}
where $z=z_1+z_2$\,. Then $z_2$~solves the following equation
\begin{equation}\label{e:limit-z2}
\dot{z}_2=Az_2+\overline{f'_u}(u)z \,, \quad  z_2(0)=0 \,.
\end{equation}
And by the wellposedeness of the above problem, we have that~$z^\e$
uniquely converges in distribution to~$z$ which solves~(\ref{e:z}).
This proves Theorem~\ref{thm:deviation}.
\end{proof}

\section{Application to stochastic FitzHugh--Nagumo system}
\label{sec:asfns}

\begin{figure}
\centering
\begin{tabular}{c@{}cc@{}c}
\rotatebox{90}{\hspace*{11ex}$100u^\e$}&
\includegraphics[width=0.46\textwidth]{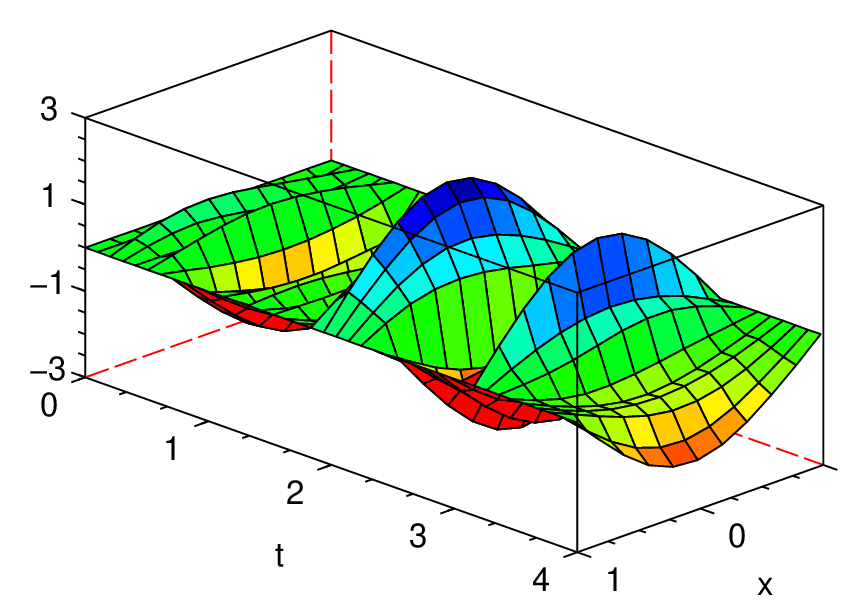}&
\rotatebox{90}{\hspace*{11ex}$10v^\e$}&
\includegraphics[width=0.46\textwidth]{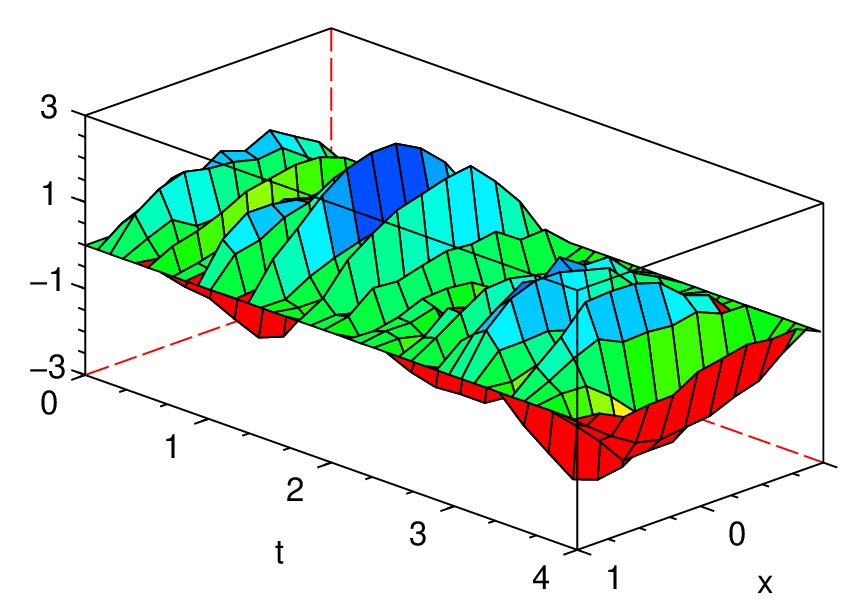}
\end{tabular}
\caption{an example realisation of the FitzHugh--Nagumo system
\eqref{e:eg1}--\eqref{e:eg2} with $\sigma_1=0$\,, $\sigma_2=3$\,,
$W=(I-A)^{-1}Z$ for a cylindrical Wiener process~$Z(t)$, small
parameter $\epsilon=0.1$\,, and domain $L=1$\,.} \label{fig:fhn}
\end{figure}

Consider the following stochastic FitzHugh--Nagumo system with
Dirichlet boundary on~$[-L,L]$:
 \begin{eqnarray}
 du^\e&=&\big[u_{xx}^\e+u^\e-(u^\e)^3+v^\e\big]\,dt\,,  \label{e:eg1}\\
 dv^\e&=&\frac{1}{\e}\big[v_{xx}^\e-v^\e+u^\e\big]\,dt+\frac{3}{\sqrt{\e}}\,dW.
 \label{e:eg2}
 \end{eqnarray}
$W(t)$ is a $L^2(-L, L)$-valued Wiener process with covariance~$Q$.
Let $A=\p_{xx}$ with zero Dirichlet boundary on $(-L, L)$\,, $f(u,
v)=u-u^3+v$\,, $g(u, v)=-v+u$\,, $\sigma_1=0$ and $\sigma_2=3$\,,
then~(\ref{e:eg1})--(\ref{e:eg2}) is in the form
of~(\ref{e:aue})--(\ref{e:ave}). Figure~\ref{fig:fhn} plots an
example solution of the FitzHugh--Nagumo system
\eqref{e:ue}--\eqref{e:ve} showing that the noise forcing of~$v$ feeds indirectly
into the dynamics of~$u$.

\begin{figure}
\centering
\begin{tabular}{c@{}c}
\rotatebox{90}{\hspace{20ex}$|\overline{u^\epsilon(0,t)|}$} &
\includegraphics{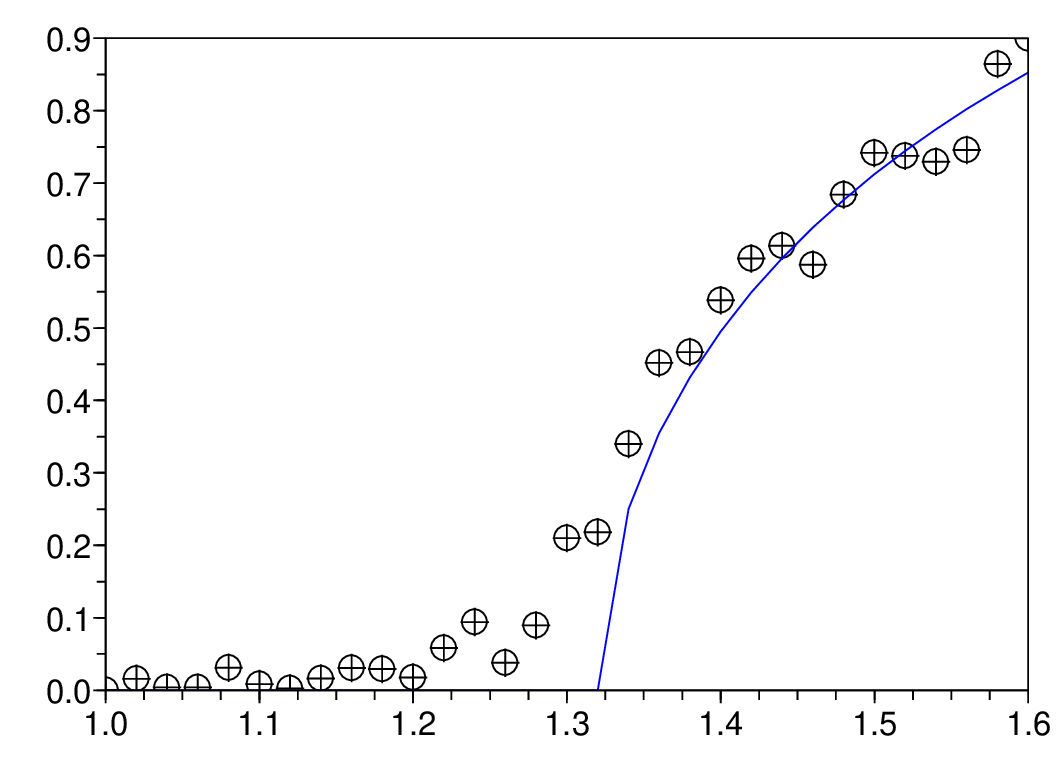}\\[-2ex]
& $L$
\end{tabular}
\caption{\textsc{rms} of the slow mode~$u^\epsilon(0,t)$ versus the half-length~$L$ at parameter $\epsilon=0.1$ for numerical simulations of~\eqref{e:eg1}--\eqref{e:eg2}.  This clearly shows the bifurcation that the averaged equation~\eqref{e:averaged} predicts from $L=\pi/2^{5/4}=1.3207$ as shown by the curve.}
\label{fig:fhnbif}
\end{figure}

Furthermore, for any fixed~$u$ the \spde~(\ref{e:eg2}) has a unique stationary
solution~$\eta_u^\e$ with distribution
 \begin{equation*}
 \mu_u=\mathcal{N}\left((I-\partial_{xx})^{-1}u,\frac{9(I-\partial_{xx})^{-1}Q}{2}+(I-\partial_{xx})^{-2}u\otimes u\right).
 \end{equation*}
Then
\begin{equation*}
\bar{f}(u)=u-u^3+(I-\partial_{xx})^{-1}u \,,
\end{equation*}
and the averaged equation is the deterministic \pde
\begin{equation}\label{e:averaged}
du=\big[\partial_{xx} u+u-u^3+(I-\partial_{xx})^{-1}u\big]\,dt\,.
\end{equation}
This averaged equation predicts a bifurcation as $L$~increases.  For a fundamental mode on~$(-L,L)$ of $u=a\cos kx$ for wavenumber $k=\pi/(2L)$, the linear dynamics of the deterministic averaged \pde~\eqref{e:averaged} predicts that $u=0$ is stable for $k>2^{1/4}$, that is, $L<\pi/2^{5/4}$.  For larger domains with $L>\pi/2^{5/4}=1.3209$ the averaged \pde\ predicts a bifurcation to finite amplitude solutions.  This bifurcation matches well with numerical solutions as seen in Figure~\ref{fig:fhnbif} which plots the mean mid-value as a function of~$L$: the bifurcation is clear albeit stochastic.

\begin{figure}
 \centering
\begin{tabular}{cc}
\rotatebox{90}{\hspace*{21ex}$100z$}&
\includegraphics{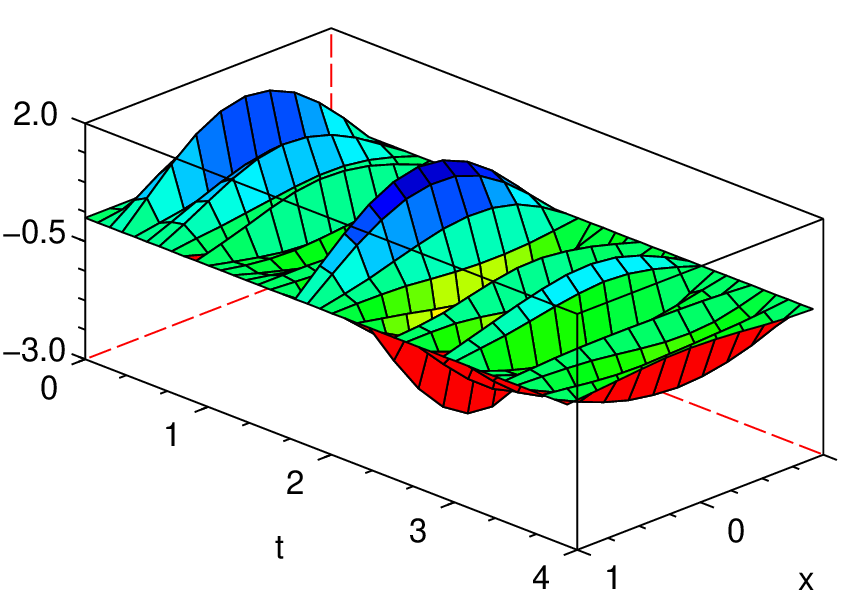}
\end{tabular}
\caption{an example realisation of the stochastic
deviation~\eqref{e:deviation} for small parameter $\epsilon=0.1$\,,
$\sigma_2=3$\,, $Q=(1-\p_{xx})^{-1}$ and $L=1$.} \label{fig:fhnav}
\end{figure}

To quantify the fluctuations evident in the dynamics of~$u^\epsilon$ we turn to the \pde\ for deviations.
Denote by~$\eta_u$ the unique stationary solution of
\begin{equation*}
dv=\big[\p_{xx}v-v+u\big]\,dt+dW.
\end{equation*}
Then $f(u, \eta_u)-\bar{f}(u)=\eta_u(t)-(I-\partial_{xx})^{-1}u$ and we have that the
deviation~$z$ solves the \spde
\begin{equation}\label{e:deviation}
dz=\big[\p_{xx}
z+(1-3u^2)z\big]\,dt+3(I-\partial_{xx})^{-1}\sqrt{Q}\,d\bar{W}
\end{equation}
with $\bar{W}(t)$ being a cylindrical Wiener process defined on a larger probability space with covariance operator~$\operatorname{Id}$ on $L^2([-L, L])$. Figure~\ref{fig:fhnav} plots an example of the deviation between the original system and the averaged system, the \spde~\eqref{e:deviation}. Including the deviation \spde~\eqref{e:deviation} gives a much better approximation than the deterministic averaged equation~\eqref{e:averaged}. In particular, when the initial state $u_0=0$ and there is no direct forcing of~$u$, $\sigma_1=0$\,, as used in Figures \ref{fig:fhn}~and~\ref{fig:fhnav}, then the averaged solution is identically $u(t)=0$\,.  In such a case, the dynamics of~$u$ as seen in Figure~\ref{fig:fhn} are modelled solely by deviations governed by the \spde~\eqref{e:deviation}.

\begin{figure}
\centering
\begin{tabular}{c@{}c}
\rotatebox{90}{\hspace{20ex}$100\,\text{variance}$} &
\includegraphics{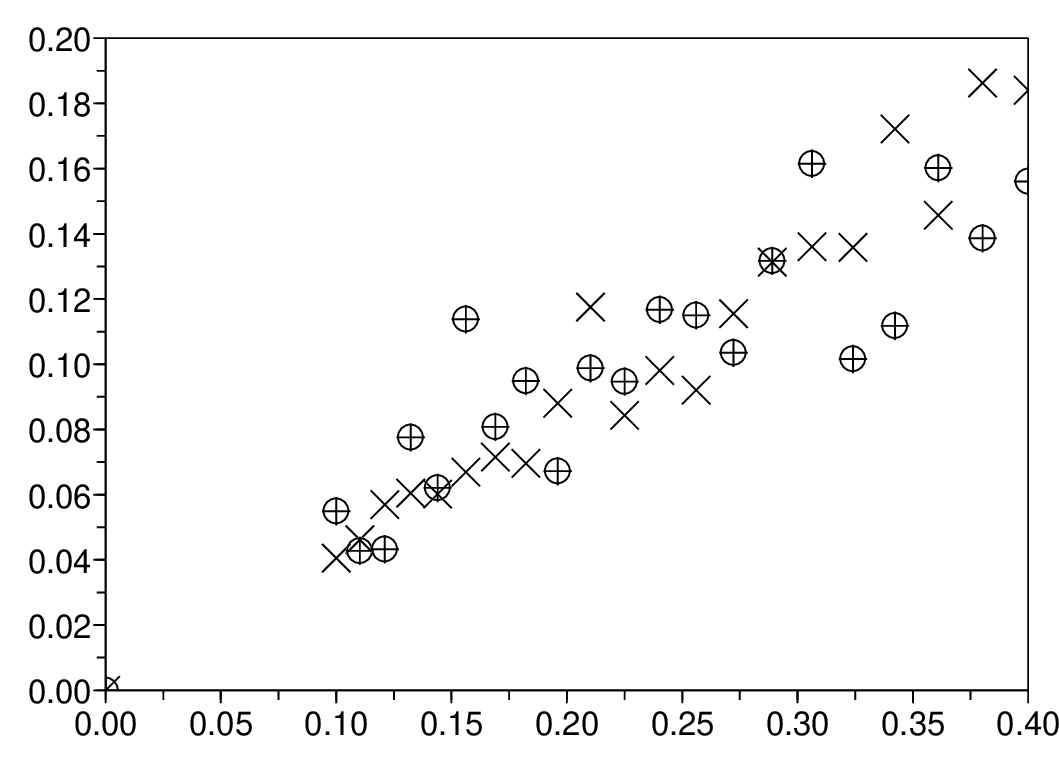}\\[-2ex]
& $\epsilon$
\end{tabular}
\caption{simulations over a time of~$128$ (with $L=1$) show:
circles, fluctuations in~$u^\epsilon(0,t)$ whose variance is plotted
as function of~$\epsilon$  ($\Delta t=0.002$); crosses, fluctuations
in~$\sqrt\epsilon z$ whose variance is plotted as a function
of~$\epsilon$ ($\Delta t=0.02$).  The two agree remarkably well:
their standard deviations are best fitted by the nearly identical
$0.065\sqrt\epsilon$ and $0.066\sqrt\epsilon$, respectively.}
\label{fig:fhnsig}
\end{figure}

To quantify the comparison between the deviation \pde~\eqref{e:deviation} and the original dynamics of the FitzHugh--Nagumo system~\eqref{e:eg1}--\eqref{e:eg2}, we look at how the fluctuations scale with scale separation parameter~$\epsilon$.  At $L=1$ the noise free state $u=u^\epsilon=0$ is stable.  As parameter~$\epsilon$ increases the fluctuations in~$u^\epsilon$ have variance as plotted by circles in Figure~\ref{fig:fhnsig}.  The scatter in the plot reflects that averages over much longer times would be better.  However, the variance does scale with~$\epsilon$ as required, and in close correspondence to that predicted by the deviation \pde~\eqref{e:deviation} (crosses).

\paragraph{Acknowledgements} This research was supported by the
Australian Research Council grant DP0774311 and NSFC grant
10701072.

\end{document}